\newtheorem{theorem}{Theorem}[section]
\newtheorem{proposition}[theorem]{Proposition}
\newtheorem{lemma}[theorem]{Lemma}
\newtheorem{corollary}[theorem]{Corollary}
\newtheorem{definition}[theorem]{Definition}
\newtheorem{example}[theorem]{Example}
\newenvironment{proof}[1][Proof]{\textbf{#1.} }{\ \rule{0.5em}{0.5em}}
\newcommand{\ceil}[1]{\left \lceil{#1}\right \rceil}
\newcommand{\PG}{P}
\begin{document}

\author{Yuri Muranov \\
University  of  Warmia and \\
Mazury in Olsztyn, Olsztyn\\
Poland \\
\and 
Anna Muranova \\
University  of  Warmia and \\
Mazury in Olsztyn, Olsztyn\\
Poland \\
}
\title{Homology of  graph burnings}
\date{}
\maketitle

\begin{abstract} In this paper   we study graph burnings using  methods of algebraic topology.  We prove that the  time function  of a burning is a graph map to a path graph.  Afterwards,  we define  a category whose  objects are  graph  burnings and  morphisms  are graph maps  which commute with   the time functions of the burnings.  In this category we study relations between burnings of different graphs and, in particular, between 
burnings of  a graph and its  subgraphs.    For every graph, we  define a simplicial complex, arising  from the set of all the burnings, which we call a configuration space of the burnings.  Further,  simplicial structure of the configuration space gives  burning homology  of the graph. We  describe properties of the configuration space  and the burning homology theory. In particular,  we prove that the one-dimensional skeleton of the configuration space of a graph $G$  coincides with the complement graph of $G$. The results are illustrated with numerous examples. 
\end{abstract}

{\bf Keywords:} \emph{homology of graphs, simplicial homology, simplicial complex, graph burning,  path graph,     burning time,  trees.}
\bigskip

AMS Mathematics Subject Classification 2020:  55N35, 18G85, 18N50,    94C15, 05C90, 05C05.


\section{Introduction}\label{S1}
\setcounter{equation}{0}

The spread of  influence  in  network and similar network processes  can be studied using the  mathematical model of graph burning
 \cite{Bonato},  \cite{Burning_2014},  \cite{Bonato_0}.  The graph burning of a finite graph $G$  is defined as follows (see Definition~\ref{d3.1}).  Let $G=(V,E)$ be a graph and $t$ be a parameter (time) which is increasing
from $t=1$ and which  is  taking sequential integer values.  Every vertex of the graph $G$ can be burned or unburned. A state to be burned or unburned for a vertex is changed depending on the time $t\in \{1,2,\dots \}$ by the following rules: initially all 
vertices of $G$ are unburned. If a vertex $v\in V$ is burned in a time $t=t_0$ then it is burned in any time $t\geq t_0$. At any time $t\geq 1$ we burn one  vertex which is unburned in the time  $t$ if such a vertex exists. If a vertex $v\in V$ is burned in a time $t=t_0$  then in the time $t_0+1$ all neighbor vertices of $v$ becomes burned.  The process ends in a time $t=T$ when all vertices of $G$ are burned.  This time is called the \emph{end time}  of the burning process. We note that for the one-vertex graph there is only one burning process  with  $T=1$.   The vertices which we burn are called \emph{sources} and the vertex which we burn in a time $t$ is called the \emph{source at time $t$}.  Note that our definition slightly differs from the definition of burning that is given in the papers cited above. In particular, as follows from our definition, the distance  between two consequent source vertices  is greater or equal two.  The \emph{burning number} $b(G)$ of a graph  $G$ is the minimum of value $T$  for all burning processes for~$G$. Let $P_n$ be a \emph{path graph} with 
the set of vertices $\{1,\dots, n\}$ and with the set of edges
$\{i,i+1\}$ for $i=1, \dots, n-1$. For the burning process defined in  \cite{Bonato},  \cite{Burning_2014}, and  \cite{Bonato_0},  it is known that $b(P_n)=\ceil{\sqrt n}$ where for $r\in \mathbb R$ the number  $\ceil{r}\in \mathbb Z$ means the smallest integer greater than or equal to  $r$ \cite[Th. 2.9]{Bonato_0}. 

In this paper   we study graph burning using methods of algebraic topology in graph theory.  We consider only \emph{finite} graphs, i. e. those having a finite number of vertices.
We focus on studying a  set $\mathcal B(G)$ of all  burnings of a graph $G$ and on studying  relations between burnings of different graphs. In particular, we study relations between burnings of graph and its subgraphs. 
 We define a natural structure of a path graph $P_T$ on the discrete time
 $\{1, \dots, T\}$ in the process of burning which ends at the time~$T$. Afterwards we   prove that every burning process of $G$ defines an unique graph map $\lambda\colon G\to P_T$ and describe relation of this  map  to the  burnings of subgraphs of $G$.    

We define and study a configuration space $\Delta _G$ of  the set $\mathcal B(G)$ which  has a natural structure of a simplicial complex and describe its basic properties.  We prove that the one-dimensional skeleton of the configuration space  $\Delta_G$ coincides with the complement graph $\overline G$ of the graph $G$. Further, using this complex  we define  homology groups  for $\mathcal B(G)$  and describe properties of obtained burning homology theory.  We give numerous examples  to illustrate the obtained results.

\section{Preliminaries}\label{S2}
\setcounter{equation}{0}

In this section, we recall basic notions of  the graph theory, see e. g. 
\cite{Gary, MiHomotopy}.

\begin{definition}\label{d2.1}  \rm
 \emph{A graph} $G=(V_G,E_G)$ is a non-empty finite set $V_G$ of  \emph{vertices} together with a set $E_G$ of non-ordered pairs $\{v,w\}\in E_G$ of distinct vertices $v,w\in V_G$  called \emph{edges}. 
\end{definition}

 Sometimes we will omit 
the subscript $G$ from the set  $V_G$ of vertices and from the set $E_G$ of edges if the graph is clear from the context. 

Two different  vertices are \emph{adjacent} if they are connected by an edge. All vertices which are adjacent to vertex $v$ are called \emph{neighbors} of $v$.  A vertex is \emph{incident} with an edge 
 if the vertex is one of the endpoints of that edge. 
Two different edges $e, e^{\prime}\in E_G$  of a graph $G$ are called \emph{incident} if they have one common vertex.

\begin{definition}\label{d2.2}\rm  
We say that a  graph  $H$ is a  \emph{subgraph} of a graph $G$  and  we write $H\subset G$ if  $V_H\subset V_G$ and $E_H\subset E_G$. A subgraph 
$H\subset G$ is the \emph{induced subgraph} if for any edge $\{v,w\}\in E_G$,  such that $v, w\in V_H$ we have $\{v,w\}\in E_H$. In this case we write 
$H\sqsubset G$.  For a subgraph $H\subset G$ we denote by $\widehat{H}
$ the induced subgraph which has the same set of vertices as $H$. 
\end{definition}

\begin{definition} \label{d2.3}\rm A \emph{graph map $f\colon G\to H$}  from a graph
$G$\emph{\ }to a graph $H$ is a map of vertices $f|_{V_G}\colon
V_{G}\rightarrow V_{H}$ such that for any edge   $\{v,w\}\in E_G$,  we have   
$\{f(v),f(w)\}\in E_H$  or $f(v) =f(w)\in V_H$. The map
$f$ is called a \emph{homomorphism}  if $\{f(v),f(w)\}
\in E_H$ for any $\{v,w\}\in E_G$. 
\end{definition}

For a graph $G$,  we  denote by $\operatorname{Id}_G\colon G\to G$ the \emph{identity} map (homomorphism)  that is the identity map on the set of vertices and  on the set  of edges. 

Thus we obtain a category  $\mathbf G$ whose objects  are  finite graphs
and whose morphisms are graph maps  and   a category  $\mathbb G$  whose objects  are  finite graphs
and whose morphisms are homomorphisms.

\begin{definition}\label{d2.4}  \rm For $n\geq 1$,  a \emph{path} in a graph $G=(V,E)$ is an alternating sequence $v_0, a_1, v_1, a_2, \dots , v_{n-1}, a_n, v_n$ of vertices 
$v_i\in V$ and edges $a_i\in E$ such that $a_i=\{v_{i-1}, v_i\}$ for $i=1, \dots, n$.  
For $n=0$,  a path is given by a vertex $v_0\in V$. The integer $n$ is the \emph{length} of the path.  The vertex $v_0$ is  the \emph{origin}  and the vertex $v_n$ is the \emph{end} of the path. A path $v_0, a_1, v_1, a_2, \dots , v_{n-1}, a_n, v_n$ is   \emph{closed} if $v_0=v_n$. A closed path  is  a  \emph{circuit } if all edges are pairwise distinct.   A \emph{cycle} in a graph is a non-empty circuit in which only the first and last vertices are equal.
\end{definition} 

\begin{definition}\label{d2.5} \rm  (i) A  graph  $G=(V,E)$ is \emph{connected}
if for any two distinct vertices $v, w\in V$ there is a path  for which $v_0=v$ is the  origin and $v_n=w$ is the end of the path.

(ii) A  connected graph $G$ is a \emph{tree} if it is \emph{acyclic},  that is it does not  contain  nontrivial cycles.  

(iii) A graph $G$ is \emph{complete} if  every two distinct vertices of $G$  are adjacent.  We denote by $K_n$ a complete graph with $n$ vertices. In particular, we denote by $K_1$ the one vertex graph.

(iv) A \emph{complete bipartite  graph} is a graph whose vertices can be divided into two disjoint sets of vertices $V$ and $W$ such that the set of edges $E$ of the graph is given by the set of all pairs of vertices $\{v, w\}$ such that $v\in V, w\in W$. If the sets $V$ and $W$  consist of $n$ and $m$ elements, respectively,  the complete bipartite graph is denoted by $K_{n,m}$. 

(v) A graph is {\em edgeless}, if its set of edges is empty. We denote by $\overline K_n$ the edgeless graph with $n\ge 1$ vertices.

(vi)
 The {\em complement graph} of a graph $G=(V,E)$ is a graph denoted by $\overline G$ on the same set of vertices $V$ such that two distinct vertices of $\overline G$  are adjacent if and only if they are not adjacent in $G$. 
\end{definition}

Note that the edgeless graph $\overline K_n$ is the complement of the complete graph $K_n$ for every $n\ge 1$, which explains the notations for edgeless graphs.

\begin{definition}\label{d2.6}\rm For a graph $G=(V,E)$, the \emph{distance} $d_G(v,w)$ between two vertices $v,w\in V$ is the length of the shortest path connecting these vertices.  If there is no path connecting  vertices $v,w\in V$ then we set $d_G(v,w)=\infty$.  
\end{definition}

\begin{definition}\label{d2.7}  \rm  Let $G=(V_G,E_G)$ be a graph. 
For an integer $n\geq 0$, the \emph{n-th closed neighborhood} of a vertex $v\in V_G$ is an induced subgraph with the set of vertices $\{w\in V_G\;|\; \, d_G(v,w)\leq n\}$. We denote this neighborhood by~$N_n(v)$. 

\end{definition}

\begin{definition}\label{d2.8} \rm Let $H_i=(V_i, E_i)$,  $i=1,2$ be two subgraphs of a graph $G$.

(i)  The \emph{union} $H_1\cup H_2$ is the subgraph of $G$ with  the set of vertices $V_1\cup V_2$ and with the set of edges $E_1\cup E_2$. 

(ii) The \emph{intersection} $H_1\cap H_2$ is the subgraph of $G$ with  the set of vertices $V_1\cap V_2$ and with the set of edges $E_1\cap E_2$. 

(iii) The \emph{induced union} $H_1\Cup H_2$ is the induced subgraph of $G$ induced by the union $H_1\cup H_2$. It  equals  to the induced subgraph $\widehat{H_1\cup H_2}$. 

\end{definition} 

\section{Morphisms of  graphs and graph burning}\label{S3}
\setcounter{equation}{0}

In this section we define  a \emph{graph burning} of a graph $G$. Afterwards, we prove that there exists a graph map of the graph $G$ to a path graph $P_n$, defined by the burning. Further, we describe  various relations between burnings of a graph and burnings of its induced subgraphs. At the end of the section  we compute burning time for various burnings of several classes of graphs. 

 Let $G=(V_G,E_G)$ be a graph and 
$S_G=(v_1, \dots, v_k)$ be a non-empty ordered  set of vertices. For $1\leq j\leq k+1$,
consider a set of induced  subgraphs $\mathcal N_j\sqsubset G$ defined by 
\begin{equation}\label{3.1}
\mathcal N_j= \begin{cases} N_{j-1} (v_1)\Cup \dots \Cup N_1(v_{j-1})\Cup N_0(v_j) &  \text{for} \ \  1\leq j\leq k,\\
 N_{k} (v_1)\Cup N_{k-1}(v_2)\Cup \dots \Cup N_1(v_{k}) & \text{for} \  j=k+1.\\
\end{cases} 
\end{equation}

For $2\leq j\leq k+1$, let  $U_j$ be  the induced subgraph 
of $G$ given by   
\begin{equation}\label{3.2}
U_j=N_{j-1} (v_1)\Cup N_{j-2}(v_2)\Cup \dots \Cup N_1(v_{j-1}).
\end{equation}
We note that  for $2\leq j\leq k$, the set of vertices $V_{U_j}$ of the graph $U_j$ is obtained from the set of vertices $V_{\mathcal N_j}$ of the graph $\mathcal N_j$  by deleting the vertex $v_j$ and $U_{k+1}=\mathcal N_{k+1}$.

It follows immediately from (\ref{3.1}) and (\ref{3.2}) that we have the following  commutative diagram of inclusions of induced subgraphs 
\begin{equation}\label{3.3}
\begin{matrix}
\mathcal N_1 &\sqsubset & \mathcal N_2& \sqsubset &\mathcal N_3&\sqsubset \dots \sqsubset  &\mathcal N_k&\sqsubset&\mathcal N_{k+1}&\sqsubset  G\\
 && \sqcup&  &\sqcup &\dots & \sqcup&&||&\  \ \ || \\
                  &  &U_2&\sqsubset& U_3&\sqsubset\dots \sqsubset  &U_k&\sqsubset&U_{k+1}&\sqsubset  G\\
&& \sqcup&  &\sqcup &\dots & \sqcup&&\sqcup&\ \ \  \sqcup \\
                  &  &\mathcal N_1&\sqsubset& \mathcal N_2&\sqsubset\dots \sqsubset  &\mathcal N_{k-1}&\sqsubset&\mathcal N_{k}&\sqsubset  \mathcal N_{k+1}\\
\end{matrix}
\end{equation}
in which each row is a  filtration of the graph  $G$ by induced subgraphs.

\begin{definition}\label{d3.1} \rm Let $G=(V,E)$ be a graph and 
$S_G=(v_1, \dots, v_k)$ be a non-empty ordered  set of vertices. 
 The sequence $S_G$ is called a \emph{burning sequence}  if  in  diagram (\ref{3.3}) $v_j \notin U_j$ for $2\leq j\leq k$ and  $U_{k+1}=G$. 
 The elements of $S_G$ are called \emph{burning sources}. The vertex $v_i$ is called the \emph{source at time $i$}. 
\end{definition}

 A burning sequence $S_G$ defines a commutative diagram  (\ref{3.3}) and since in this case $\mathcal N_{k+1}=U_{k+1}=G$
we obtain a filtration of $G$ 
\begin{equation}\label{3.4}
\mathcal N_1 \sqsubset  \mathcal N_2\sqsubset \dots \sqsubset  \mathcal N_k\sqsubset \mathcal N_{k+1}=G. 
\end{equation}
Let $\mathbb N$ be the set of positive integers. Define a function 
\begin{equation}\label{3.5} 
\lambda_G\colon V_G\to \mathbb N\;\mbox{ by }\; \lambda_G(v)=\min\{i \,| \, v\in \mathcal N_i\}.
\end{equation}
It follows from  the  filtration (\ref{3.4})   that   the function $\lambda_G$ is well defined. 

\begin{lemma}\label{l3.2} Let $S_G=(v_1, \dots, v_n)$ be a burning sequence of a graph $G=(V,E)$.  Let $T=\max\{\lambda_G(v)\;|\;  \, v\in V_G\}$ where $\lambda_G$ is defined by \eqref{3.5}. Then the function $\lambda_G$ is a surjective map from  the set $V$ of vertices to the set  $\{1,2,\dots, T\}$,  
where  $T=k$ or $T=k+1$. 
\end{lemma}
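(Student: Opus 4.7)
The plan is to show two facts in sequence: (a) every source $v_i$ satisfies $\lambda_G(v_i)=i$, so the image of $\lambda_G$ contains $\{1,2,\dots,k\}$; and (b) the image is contained in $\{1,\dots,k+1\}$, with $k+1$ appearing exactly when $\mathcal N_k\neq G$. From these two, surjectivity onto $\{1,\dots,T\}$ follows immediately with $T\in\{k,k+1\}$.

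For step (a), I would argue by induction on $i$. For $i=1$, we have $\mathcal N_1=N_0(v_1)=\{v_1\}$, so $\lambda_G(v_1)=1$ directly from definition \eqref{3.5}. For $2\leq i\leq k$, the defining condition of a burning sequence gives $v_i\notin U_i$. The key observation is a term-by-term comparison: writing
\[
\mathcal N_{i-1}=N_{i-2}(v_1)\Cup N_{i-3}(v_2)\Cup\dots\Cup N_0(v_{i-1}),\qquad U_i=N_{i-1}(v_1)\Cup N_{i-2}(v_2)\Cup\dots\Cup N_1(v_{i-1}),
\]
one sees that $N_{i-1-s}(v_s)\sqsubset N_{i-s}(v_s)$ for each $s=1,\dots,i-1$, so $\mathcal N_{i-1}\sqsubset U_i$. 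Hence $v_i\notin\mathcal N_{i-1}$. On the other hand $v_i\in N_0(v_i)\subset\mathcal N_i$, so by \eqref{3.5} we conclude $\lambda_G(v_i)=i$.

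For step (b), recall that by Definition \ref{d3.1}, $U_{k+1}=G$, and by \eqref{3.1} we have $\mathcal N_{k+1}=U_{k+1}=G$. Therefore every vertex lies in some $\mathcal N_j$ with $j\leq k+1$, giving $\lambda_G(v)\leq k+1$ for all $v\in V_G$. Combining with (a), the image of $\lambda_G$ always contains $\{1,\dots,k\}$ and is contained in $\{1,\dots,k+1\}$. If $\mathcal N_k=G$ then every vertex is in some $\mathcal N_j$ with $j\leq k$, so $T=\max\lambda_G=k$ and the image is exactly $\{1,\dots,k\}$. Otherwise, some vertex first appears in $\mathcal N_{k+1}\setminus\mathcal N_k$, giving $T=k+1$ and forcing the value $k+1$ to be attained; the image is then all of $\{1,\dots,k+1\}$.

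The only real obstacle is the inclusion $\mathcal N_{i-1}\sqsubset U_i$ used in step (a): one must carefully read off the indices from the definitions \eqref{3.1} and \eqref{3.2}, and use that for any vertex $w$ and any nonnegative integer $m$ we have $N_m(w)\sqsubset N_{m+1}(w)$, together with the fact that the induced union is monotone under induced-subgraph inclusions. Once this is in place, the rest is a clean bookkeeping of the three cases $\lambda_G(v_1)=1$, $\lambda_G(v_i)=i$ for $2\leq i\leq k$, and $\lambda_G(v)\in\{k,k+1\}$ for non-sources, completing the proof.
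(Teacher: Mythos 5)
Your proof is correct and follows essentially the same route as the paper: show $\lambda_G(v_i)=i$ for each source by combining $v_i\in\mathcal N_i$ with $v_i\notin U_i$ and the inclusion $\mathcal N_{i-1}\sqsubset U_i$, then observe $\mathcal N_{k+1}=U_{k+1}=G$ bounds the image by $k+1$, and split on whether $\mathcal N_k=\mathcal N_{k+1}$. The only difference is that you verify $\mathcal N_{i-1}\sqsubset U_i$ by an explicit term-by-term index comparison, whereas the paper simply reads this inclusion off its diagram (3.3); your version is slightly more self-contained but not a different argument.
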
 
\begin{proof} It follows from \eqref{3.4}  and (\ref{3.5}) that for any vertex $v\in V$ we have $\lambda_G(v)=j\in \{1,\dots, k+1\}$.  By (\ref{3.1}),    we obtain that the burning source $v_j\in \mathcal N_j$ for $1\leq j \leq k$. By Definition \ref{3.1}, $v_j\notin U_j$ for $2\leq j\leq k$ and, hence, by (\ref{3.3}), $v_j\notin \mathcal N_{j-1}$ for $2\leq j\leq k$. Hence  $\lambda_G(v_j)=j$ for any  source 
$j\in S_G$.  In the case   $\mathcal N_k=\mathcal N_{k+1}$ we obtain that $T=k$. Otherwise $\mathcal N_k\ne \mathcal N_{k+1}$ and   $T=k+1$. 
\end{proof}

 We call a pair $\mathbf B(G)\colon =(\lambda_G, S_G)$  the \emph{burning (process)} for the graph $G$. We call the integer $T$   the \emph{end time of the burning}. Further,   for a vertex $v\in V_G$ the value $\lambda_G(v)$ is called  the \emph{burning time of the vertex} $v$.   We write $\mathbf B^T(G)$   if the process ends in the  time $T$. We denote  by $\lambda(\mathbf  B^T(G))$ the function $\lambda_G$ which corresponds to the  process $\mathbf B^T(G)$.

\begin{lemma}\label{l3.3}
Let $G=(V,E)$ be a graph with a burning  $\mathbf  B(G)=(\lambda, S_G)$.    Let  $v,w\in V$ be  two adjacent vertices. If   $v\in \mathcal N_j$  for some $j$ then $w \in \mathcal N_{j+1}$.
\end{lemma}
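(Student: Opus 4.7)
The plan is to reduce the statement to the triangle inequality for the graph distance $d_G$, combined with a direct reading of definition (3.1). First, I would observe that a vertex $u$ lies in $\mathcal{N}_j$ exactly when there is some index $i \in \{1, \dots, \min(j, k)\}$ with $d_G(u, v_i) \leq j - i$. This reading is uniform across both cases of (3.1): whether $j \leq k$ or $j = k+1$, the radius around the source $v_i$ appearing in the union is precisely $j - i$. Since the induced union $\Cup$ only affects which edges are retained, not the underlying vertex set, membership in $\mathcal{N}_j$ is a purely vertex-theoretic condition that reduces to the existence of such an $i$.

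Second, I would apply the hypothesis $v \in \mathcal{N}_j$ to pick an index $i$ with $d_G(v, v_i) \leq j - i$. Since $v$ and $w$ are adjacent, $d_G(v, w) = 1$, and the triangle inequality gives
\[
d_G(w, v_i) \;\leq\; d_G(w, v) + d_G(v, v_i) \;\leq\; 1 + (j - i) \;=\; (j+1) - i.
\]
Hence $w \in N_{(j+1) - i}(v_i)$.

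Third, I would verify that this places $w$ inside $\mathcal{N}_{j+1}$. The admissible range of indices only grows with the subscript, $\{1, \dots, \min(j, k)\} \subseteq \{1, \dots, \min(j+1, k)\}$, so the same $i$ is admissible for $\mathcal{N}_{j+1}$, and the radius of the corresponding closed neighborhood in (3.1) is exactly $(j+1) - i$. Therefore $w \in N_{(j+1) - i}(v_i) \subseteq \mathcal{N}_{j+1}$, as required. The argument is essentially a one-line application of the triangle inequality; the only potential pitfall is the bookkeeping between the two cases of (3.1), but because the radius formula $j - i$ is uniform across them, no real obstacle arises. The extremal case $j = k+1$ is vacuous, since then $\mathcal{N}_{k+1} = G$ already contains $w$.
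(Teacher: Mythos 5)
Your proof is correct and follows essentially the same route as the paper's: unwind the definition of $\mathcal{N}_j$ to find a source $v_i$ with $d_G(v, v_i)\le j-i$, then apply the triangle inequality with $d_G(v,w)=1$ to conclude $w\in N_{(j+1)-i}(v_i)\subseteq \mathcal{N}_{j+1}$. Your extra bookkeeping about the two cases of (3.1) and the boundary case $j=k+1$ is careful but does not change the argument.
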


\begin{proof}
By the definition  of $\lambda$ in (\ref{3.5})  and using (\ref{3.1}) we obtain that 
$$
v\in \mathcal N_j=N_{j-1} (v_1)\Cup N_{i-2}(v_2)\Cup \dots \Cup N_1(v_{j-1})\Cup N_0(v_j),
$$
Hence  there exists at least one number $i \, (1\leq i\leq j)$ such that $v\in~ N_{j-i}(v_i)$, that is $d_G(v_i, v)\leq j-i$. We have $d_G(v,w)=1$  since $\{v, w\}\in E$.  Hence, by the triangle inequality for the graph distance $d_G$ we obtain that 
$d_G(v_i,w)\leq  d_G(v_i,v)+ d_G(v,w) = j-i+1$. Afterwards,  by Definition  \ref{d2.7} of closed neighborhoods, we obtain that $w\in N_{j-i+1}(v_i)\subset~\mathcal N_{j+1}$.
\end{proof}

\begin{lemma}\label{l3.4}
Let $G=(V,E)$ be a graph and $\mathbf  B^T(G)$ be a graph burning with $\lambda=\lambda_G$.  Then for any two adjacent vertices $v,w\in V$  we have $|\lambda(v)-\lambda(w)|\in\{0,1\}$.
\end{lemma}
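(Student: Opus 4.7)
The plan is to reduce this statement to a direct application of Lemma \ref{l3.3}, which already does the essential work. Assume $v, w \in V$ are adjacent and set $j = \lambda(v)$, $k = \lambda(w)$. Without loss of generality I will suppose $j \leq k$, so that it suffices to prove $k - j \leq 1$.

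By the definition \eqref{3.5} of $\lambda$, the equality $\lambda(v) = j$ means exactly that $v \in \mathcal{N}_j$. Applying Lemma \ref{l3.3} to the adjacent pair $v, w$ then yields $w \in \mathcal{N}_{j+1}$. Using \eqref{3.5} once more, this gives $\lambda(w) \leq j+1$, that is $k \leq j+1$, which is exactly the inequality we need. Combined with the assumption $k \geq j$, we conclude $|\lambda(v) - \lambda(w)| = k - j \in \{0, 1\}$.

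The only small subtlety is making sure the roles of $v$ and $w$ can indeed be swapped: since the lemma hypothesis and the conclusion $|\lambda(v) - \lambda(w)| \in \{0,1\}$ are both symmetric in $v$ and $w$, and adjacency is symmetric as well, assuming $\lambda(v) \leq \lambda(w)$ loses no generality. I do not anticipate any genuine obstacle in the argument; the combinatorial content of the lemma is entirely absorbed into Lemma \ref{l3.3}, and the present statement is merely the symmetric reformulation of that result in terms of the function $\lambda$. The proof should therefore consist of two or three short sentences.
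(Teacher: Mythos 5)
Your proof is correct and follows essentially the same route as the paper: both reduce to Lemma \ref{l3.3} after a symmetry reduction, obtaining $w\in\mathcal N_{\lambda(v)+1}$ and hence $\lambda(w)\le\lambda(v)+1$ from the definition of $\lambda$ as a minimum over the filtration. No issues.
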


\begin{proof}
If $\lambda(v)=\lambda(w)$, then there is nothing to prove. Without loss of generality,  assume that $\lambda(w)>\lambda(v)$. Then $v\in\mathcal N_{\lambda(v)}$ and by Lemma \ref{l3.3} $w\in \mathcal N_{\lambda(v)+1}$, and hence, by (\ref{3.3}),  $\lambda(w) \le \lambda(v)+1$ from where the statement follows.
\end{proof}

\begin{theorem}\label{t3.5} Let $G=(V,E)$ be a graph and $\mathbf  B^T(G)$ be a graph burning. Then the function $\lambda_G=\lambda(\mathbf  B^T(G))$  defines a unique graph   map $G\to \PG_T$  which coincides 
with  $\lambda(\mathbf  B^T(G))$ on the set of vertices $V$ of the graph $G$. We continue to denote this graph map  by $\lambda=\lambda(\mathbf  B^T(G))$. 
\end{theorem}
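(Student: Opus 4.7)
The plan is to observe that this theorem is essentially a direct corollary of Lemmas \ref{l3.2} and \ref{l3.4}, together with the definition of a graph map. By Definition \ref{d2.3}, to produce a graph map $G \to P_T$ extending $\lambda_G$, I only need a function on vertices that respects adjacency in the appropriate weak sense, and such an extension is automatically unique because a graph map is determined entirely by its vertex map.

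First I would verify that $\lambda_G$ is a well-defined map of vertices $V_G \to V_{P_T}$. This is immediate from Lemma \ref{l3.2}, which states that $\lambda_G$ maps $V$ onto $\{1,2,\dots,T\}$, and the latter is exactly $V_{P_T}$ by the definition of the path graph $P_T$ given in the introduction.

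Next I would check the edge condition from Definition \ref{d2.3}. Let $\{v,w\} \in E_G$ be an arbitrary edge. By Lemma \ref{l3.4}, $|\lambda_G(v)-\lambda_G(w)| \in \{0,1\}$. In the case $\lambda_G(v) = \lambda_G(w)$, the alternative clause $f(v)=f(w)$ of Definition \ref{d2.3} is satisfied directly. In the case $|\lambda_G(v) - \lambda_G(w)| = 1$, setting $i = \min\{\lambda_G(v),\lambda_G(w)\}$, we have $1 \le i \le T-1$ and $\{\lambda_G(v),\lambda_G(w)\} = \{i, i+1\} \in E_{P_T}$ by the definition of $P_T$. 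Thus $\lambda_G$ satisfies the definition of a graph map $G \to P_T$.

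For uniqueness, I would note that Definition \ref{d2.3} specifies a graph map entirely by its action on vertices, so any graph map $G \to P_T$ that agrees with $\lambda_G$ on $V_G$ coincides with the extension constructed above. I do not anticipate a genuine obstacle here; the content of the theorem has already been packaged in Lemma \ref{l3.4}, and the remaining work is only to translate the inequality $|\lambda(v)-\lambda(w)| \le 1$ into the language of edges of the path graph $P_T$.
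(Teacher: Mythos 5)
Your proposal is correct and follows essentially the same route as the paper: well-definedness of the vertex map onto $\{1,\dots,T\}$ via Lemma \ref{l3.2}, the edge condition via Lemma \ref{l3.4} and the characterization of edges of $P_T$ as pairs $\{i,j\}$ with $|i-j|=1$, and uniqueness because a graph map is determined by its action on vertices. No gaps.
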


\begin{proof} Recall that $P_T$ is the path graph, i.e the graph with the set of vertices $\{1,\dots , T\}$ and the set of edges $\{i, i+1\}$ for $i=1, \dots, T-1$. On the set of vertices,  the function $\lambda \colon V\to V_{P_T}$  is well defined by (\ref{3.5}).  Note that for any two vertices $i, j\in V_{P_T}$ there is an edge $\{i,j\}\in E_{P_T}$  if and only if $|i-j|=1$. For any edge $\{v,w\}\in E$, the vertices $v$ and $w$ are neighbors
and,   by Lemma \ref{l3.4},  $\lambda(v)=\lambda(w)$ or $|\lambda(v)- \lambda(w)|=1$ and in this case 
$\{\lambda(v), \lambda(w)\}\in E_{P_T}$. Hence, by Definition \ref{d2.3},  $\lambda$ is well defined on the set of edges. 
\end{proof}

\begin{definition}\label{d3.6}\rm Let $G$ be a graph and    $\mathbf  B^T(G)$   be a burning of $G$. The graph map (homomorphism) $\lambda=\lambda(\mathbf  B^T(G))$ is called the \emph{burning map (homomorphism)}.   
\end{definition}

For a graph $G$ we would like to  describe invariants of graph maps $\lambda\colon G\to \PG_n  \, (n\geq 1)$ which detect burning maps. At first we consider several examples.

\begin{example}\label{e3.7} \rm Consider the  graphs $G=(V_G,E_G)$ and $H=(V_H, E_H)$ in Figure \ref{GH} which have the same set of vertices $V_G=V_H=\{1,2,3,4,5\}$.  
\begin{figure}[H]
\centering
\begin{tikzpicture}

\node (0) at  (5, 3) {$\bullet$};
\node (0a) at  (5, 2.7) {$2$};
\node (1) at (3,3) {$\bullet$};
\node (1a) at (3,2.7) {$1$};
\node (2) at (3,4.8) {$\bullet$};
\node (3) at (3,5.11) {$3$};
\node (2x) at (5,4.8) {$\bullet$};
\node (3x) at (5,5.11) {$4$};
\node (2y) at (7.1,4.8) {$\bullet$};
\node (3y) at (7.1,5.11) {$5$};

\draw (2x) edge[ color=black!120, thick, -] (2);
\draw (0) edge[ color=black!120, thick, -] (1);
\draw (1) edge[ color=black!120, thick, -] (2);
\draw (2x) edge[ color=black!120, thick, -] (2);
\draw (2x) edge[ color=black!120, thick, -] (0);
\draw (2x) edge[ color=black!120, thick, -] (2y);

\node (11) at (8.7,3.9) {$H\colon $};
\node (111) at (2.5,3.9) {$G\colon $};

\node (6) at  (9.3, 3) {$\bullet$};
\node (6b) at  (9.3, 2.7) {$1$};
\node (7) at  (11.2, 3) {$\bullet$};
\node (7b) at  (11.2, 2.7) {$2$};
\node (2z) at (11.2,4.8) {$\bullet$};
\node (3z) at (11.2,5.11) {$4$};

\node (8) at  (9.3, 4.8) {$\bullet$};
\node (8b) at  (9.3,  5.11) {$3$};

\node (2t) at (13.1,4.8) {$\bullet$};
\node (3t) at (13.1,5.11) {$5$};

\draw (2z) edge[ color=black!120, thick, -] (2t);
\draw (2z) edge[ color=black!120, thick, -] (7);
\draw (6) edge[ color=black!120, thick, -] (8);
\draw (2z) edge[ color=black!120, thick, -] (8);
\draw (6) edge[ color=black!120, thick, -] (7);
\draw (7) edge[ color=black!120, thick, -] (8);
\end{tikzpicture}
  \caption{The graphs $G$ (left) and  $H$ (right) in Example \ref{e3.7}.}
\label{GH}
\end{figure}
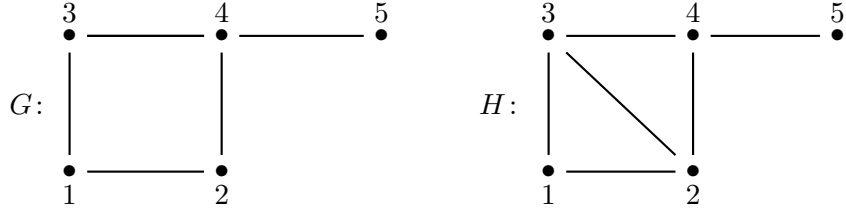
\noindent
Let $S_G=S_H=(1,5)$. Then we have the following  burning maps:
\begin{enumerate}[label=(\roman*)]
\item
$\lambda_G\colon V_G\to  V_{\PG_3}=\{1,2,3\}$ is given by $\lambda(1)=1, \lambda(2)=\lambda(3)=\lambda(5)=2, \lambda(4)=3$. The pair $(\lambda_G, S_G)$ defines a burning $\mathbf  B(G)$ with $T_G=3$  for which the burning map $\lambda_G$ is a homomorphism.
\item
$\lambda_H\colon V_H\to  V_{\PG_3}=\{1,2,3\}$  coincides with $\lambda_G$  on $V_H=V_G$ and  the pair $(\lambda_H, S_H)$ defines a  burning $\mathbf  B(H)$ with $T_H=3$ for which  the burning map $\lambda_H$ is  not a homomorphism.
\end{enumerate}
\end{example} 

\begin{lemma}\label{l3.8} Let a graph $G$ contains a \emph{triangle} subgraph which is isomorphic to the graph $T$ in Figure \ref{Triangle digraph}. Then the graph $G$ does not admit a burning homomorphism $\lambda_G$. 
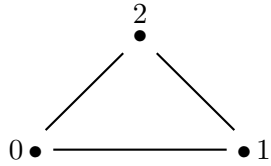
\begin{figure}[H]
\centering
\begin{tikzpicture}[scale=0.75]
\node (0) at  (-5.9, 3) {$0\, \bullet$};
\node (1) at (-1.9,3) {$\bullet\, 1$};
\node (2) at (-3.9,5) {$\bullet$};
\node (3) at (-3.9,5.4) {$2$};
\draw (0) edge[ color=black!120, thick, -] (2);
\draw (0) edge[ color=black!120, thick, -] (1);
\draw (1) edge[ color=black!120, thick, -] (2);
\end{tikzpicture}
  \caption{Triangle graph $T$.}
\label{Triangle digraph}
\end{figure}
\end{lemma}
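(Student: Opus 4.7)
The plan is to derive a contradiction by restricting any hypothetical burning homomorphism to the triangle subgraph and using a parity/distance argument in $P_T$. First I would record the stronger consequence of being a homomorphism (as opposed to a general graph map): by Definition \ref{d2.3}, for every edge $\{v,w\} \in E_G$ one has $\{\lambda_G(v), \lambda_G(w)\} \in E_{P_T}$, which by the description of edges in the path graph forces $|\lambda_G(v) - \lambda_G(w)| = 1$ exactly. In particular, the ``equal'' alternative allowed by Lemma \ref{l3.4} is excluded.

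Next, label the three pairwise adjacent vertices of the triangle subgraph by $a, b, c \in V_G$. Applying the preceding observation to each of the three edges of the triangle yields
\[
|\lambda_G(a) - \lambda_G(b)| = |\lambda_G(b) - \lambda_G(c)| = |\lambda_G(a) - \lambda_G(c)| = 1.
\]
Without loss of generality, relabel so that $\lambda_G(a) \le \lambda_G(b) \le \lambda_G(c)$. Since the first two absolute values equal $1$ and we cannot have equality of $\lambda$-values on adjacent vertices, we get $\lambda_G(b) = \lambda_G(a) + 1$ and $\lambda_G(c) = \lambda_G(b) + 1 = \lambda_G(a) + 2$. But then $|\lambda_G(a) - \lambda_G(c)| = 2 \ne 1$, contradicting the third equality. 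Hence no burning homomorphism $\lambda_G$ can exist.

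I do not expect a serious obstacle here; conceptually the argument is simply that $P_T$ is bipartite while the triangle contains an odd cycle, so there is no graph homomorphism from any supergraph of a triangle into $P_T$ at all. The only thing one must be careful about is to distinguish the homomorphism condition $|\lambda_G(v) - \lambda_G(w)| = 1$ from the weaker condition $|\lambda_G(v) - \lambda_G(w)| \in \{0, 1\}$ supplied by Lemma \ref{l3.4} for arbitrary graph maps, since it is precisely the strict-difference property that produces the contradiction.
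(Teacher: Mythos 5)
Your proof is correct and rests on the same observation as the paper's: restricting $\lambda_G$ to the triangle, the three images in $V_{P_T}=\{1,\dots,T\}$ would have to pairwise differ by exactly $1$, which is impossible, so some edge of the triangle is collapsed. The paper phrases this as ``the image of the triangle under a graph map to $P_T$ is a vertex or an edge,'' while you carry out the explicit arithmetic, but the argument is essentially identical.
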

\begin{proof} By Theorem \ref{t3.5} every burning defines a graph map $f\colon G\to P_n$ whose restriction gives a map $f|_{T}\colon T \to P_n$. The image of any such map  is  a one vertex or a one edge. In each case the image of at least one edge of $T$ will  be a vertex and the statement of the Proposition follows. 
\end{proof}

\begin{lemma}\label{l3.9}  Let a graph $G$  has a closed path of  length $n=2k+1\geq 3$. Then for every path graph $P$,  a homomorphism $\phi\colon G\to P$ does not exist.  
\end{lemma}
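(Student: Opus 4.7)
The plan is to exploit the bipartite parity obstruction: a path graph $P$ is bipartite (the two parts being vertices of odd and even index), so any closed walk in the image of a homomorphism $\phi\colon G\to P$ must have even length.

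First I would recall from Definition \ref{d2.3} that a homomorphism requires $\{\phi(v),\phi(w)\}\in E_P$ for every edge $\{v,w\}\in E_G$, so $\phi$ cannot collapse an edge to a vertex. Combined with the explicit description of $P_T$ used in the proof of Theorem \ref{t3.5}, this means that $\{\phi(v),\phi(w)\}\in E_P$ forces $|\phi(v)-\phi(w)|=1$.

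Next, suppose for contradiction that a homomorphism $\phi\colon G\to P$ exists, and consider the given closed path $v_0,a_1,v_1,\ldots,a_n,v_n$ with $v_0=v_n$ and $n=2k+1$. For each $i=1,\ldots,n$, since $a_i=\{v_{i-1},v_i\}\in E_G$, we have $\phi(v_i)-\phi(v_{i-1})=\varepsilon_i\in\{-1,+1\}$. Summing around the closed path gives
\begin{equation*}
\sum_{i=1}^{n}\varepsilon_i=\phi(v_n)-\phi(v_0)=0,
\end{equation*}
but the left-hand side is a sum of $n=2k+1$ values, each $\pm 1$, so it has the same parity as $n$ and is therefore odd. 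This contradiction proves that no such homomorphism exists.

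I do not anticipate a real obstacle: the only subtlety is making sure that one uses the stricter homomorphism condition (which forbids the degenerate case $\phi(v)=\phi(w)$ on an edge, the precise feature that fails for burning maps in general). Everything else is the standard bipartiteness argument applied to the linear structure of $P$.
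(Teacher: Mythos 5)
Your proof is correct, but it takes a genuinely different route from the paper's. The paper first observes that the image of a closed path under a homomorphism is a closed path of the same length in $P$, and then rules out odd closed paths in a path graph by induction on $k$: it takes the vertex $M$ of maximal label on the closed path and shows that the two edges incident to it at that position must coincide (giving $v_1=v_{n-1}$ or $v_{i-1}=v_{i+1}$), which peels the closed path down to one of length $n-2$, with the base case $n=3$ supplied by Lemma~\ref{l3.8}. You instead run the standard bipartite parity argument directly: since $E_{P_T}$ consists exactly of the pairs $\{i,i+1\}$, the homomorphism condition forces each increment $\varepsilon_i=\phi(v_i)-\phi(v_{i-1})$ to be $\pm 1$, and the telescoping sum around the closed path is $0$ while having the parity of $n$, which is odd. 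Your argument is shorter, avoids the induction entirely, and makes transparent that the real obstruction is bipartiteness of $P$ (so it generalizes verbatim to homomorphisms into any bipartite target); the paper's argument is more combinatorial and self-contained within its chain of lemmas, reusing Lemma~\ref{l3.8} as the base case. You also correctly flag the one point where care is needed, namely that the strict homomorphism condition (no collapsed edges) is what makes $\varepsilon_i\ne 0$; this is exactly the feature that fails for general burning maps, and both proofs rely on it. No gaps.
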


\begin{proof} We prove by an   induction in  $n=2k+1$ that a homomorphism $\phi\colon G\to P$ does not exist  for any graph $G$ which admits a closed path of length $n=2k+1$.

Firstly, it is obvious that the image of a closed path under the homomorphism is a closed path  of the same length. Therefore, it is enough to prove, that there is no a closed path of odd length in a path graph. We prove this by induction on $k$. For $k=1$ (i.e. $n=3$) the statement holds
by the proof of Lemma \ref{l3.8}.

Let us suppose that none of the path graphs admits a closed path of length $n=2(k-1)+1$. Assume, that there exists a path graph $P_m$, $m\in \Bbb N$, $m>0$, which admits a closed path $C=(v_0, a_1,\dots , v_{n-1}, a_n,  v_n=v_0)$ of length $n=2k+1\geq 5$. Let $M=\max \{v_i\;|\; v_i\in C\}$. Then there are two options:

(i) $M=v_0$,

(ii) $M= v_{i}$ and $i\ne 0$.

\noindent If $M=v_0$, then $v_1=v_{n-1}$ and $P_m$ admits a closed path $(v_1,a_2,\dots , v_{n-1})$ of length $n=2(k-1)+1$ which is a contradiction. If $M=v_i$, $i\ne {0}$, then $v_{i-1}=v_{i+1}$ and  $P_m$ admits a closed path $(v_0,a_1,\dots v_{i-1},a_{i+2},v_{i+2}\dots , v_{n})$ of length $n=2(k-1)+1$ which is again a contradiction. \end{proof}

\begin{theorem}\label{t3.10} Let a graph $G$ contains a closed path  of length $n=2k+1$, $n\geq~3$. Then the graph $G$ does not admit a burning homomorphism.
\end{theorem}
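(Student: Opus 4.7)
The plan is to derive this theorem as a direct corollary of the two preceding results, Theorem \ref{t3.5} and Lemma \ref{l3.9}. The work has already been done: Lemma \ref{l3.9} tells us that $G$ admits no homomorphism whatsoever into any path graph $P_m$, and Theorem \ref{t3.5} together with Definition \ref{d3.6} tells us that a burning homomorphism is by construction a homomorphism of $G$ into some path graph $P_T$. Combining these two facts directly gives a contradiction.

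Concretely, I would argue by contradiction. Suppose $G$ admits a burning homomorphism $\lambda = \lambda(\mathbf{B}^T(G))$ for some burning $\mathbf{B}^T(G)$ with end time $T$. By Theorem \ref{t3.5} and Definition \ref{d3.6}, the map $\lambda$ is a graph homomorphism from $G$ to the path graph $P_T$. On the other hand, by hypothesis $G$ contains a closed path of length $n = 2k+1 \geq 3$, so Lemma \ref{l3.9} (applied with $P = P_T$) asserts that no homomorphism $\phi\colon G \to P_T$ exists. This contradicts the existence of $\lambda$, so no burning homomorphism of $G$ can exist.

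There is no real obstacle here since all the work is already done in Lemma \ref{l3.9}, where the parity argument was carried out by induction on $k$, reducing long odd closed paths in $P_m$ to shorter ones by observing that the maximum vertex along such a path must be a local maximum whose two neighbors in the path coincide. The only thing to be careful about is matching notation: the theorem speaks of a burning homomorphism (not merely a burning map), which by Definition \ref{d3.6} means precisely that the associated $\lambda$ from Theorem \ref{t3.5} is a graph homomorphism in the sense of Definition \ref{d2.3}, so that Lemma \ref{l3.9} applies without any further argument. The proof is therefore just a two-line chaining of the previous lemma and theorem.
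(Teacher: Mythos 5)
Your proposal is correct and follows exactly the paper's own argument: invoke Theorem \ref{t3.5} to identify a burning homomorphism with a graph homomorphism $\lambda\colon G\to P_T$, then apply Lemma \ref{l3.9} to rule out any such homomorphism when $G$ contains an odd closed path. Nothing further is needed.
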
 
\begin{proof}  By Theorem \ref{t3.5}, every burning $\mathbf B(G)=(\lambda, S_G)$ defines a 
 graph map $\lambda\colon G\to P_T$ where $P_T$ is the path graph.  Now the statement of the Theorem follows from Lemma \ref{l3.9}. 
\end{proof}

\begin{proposition}\label{p3.11} Fix a natural number $T\ge 1$ and consider burnings 
$\mathbf B^T(P_n)=(\lambda, S_{P_n})$ of  path graphs $P_n$ with the end time $T$.
\begin{enumerate}[label=$(\roman*)$]

\item  Then the maximum value of $n$  equals  $T^2$, i. e.  the longest path graph which can be burned  for the end time $T$ has $T^2$ vertices.  
\item 
If in additional $\lambda$ is a homomorphism, then  the maximum value of $n$  equals  $T^2-T+1$.
\end{enumerate}
\end{proposition}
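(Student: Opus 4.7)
The plan is to treat both parts by analyzing the covering $\mathcal{N}_T = \bigcup_{i=1}^{k} N_{T-i}(v_i)$, which by Definition~\ref{d3.1} exhausts $V_{P_n}$. For part $(i)$ the upper bound is immediate: in $P_n$ one has $|N_r(v)| \leq 2r+1$, so
\[
n = |\mathcal{N}_T| \leq \sum_{i=1}^{k}\bigl(2(T-i)+1\bigr),
\]
which equals $T^2$ when $T = k$ and $T^2-1$ when $T = k+1$, with overall maximum $T^2$. For the matching construction I would build a burning of $P_{T^2}$ with $T = k$ in which the balls $N_{T-i}(v_i)$ tile $P_{T^2}$ with no gap or overlap, by setting $v_1 = T$ and $v_{i+1} = v_i + 2(T-i)$. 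A short computation then yields the closed form $v_j - v_i = (2T - i - j + 1)(j - i)$, and since $i + j \leq 2T - 1$ for $1 \leq i < j \leq T$ this exceeds $j - i$, confirming $v_j \notin U_j$ as Definition~\ref{d3.1} requires.

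For part $(ii)$ the central structural claim I need is that in a homomorphism burning of $P_n$ the sources are precisely the local minima of $\lambda$ viewed as a $\pm 1$-step walk on $\{1,\dots,T\}$. If a source $v_j$ had a neighbor $w$ with $\lambda(w) = j-1$, then $w \in \mathcal{N}_{j-1}$ would force $v_j \in U_j$, contradicting Definition~\ref{d3.1}; since $|\lambda(v_j) - \lambda(w)| = 1$, the only alternative is $\lambda(w) = j+1$ for every neighbor of $v_j$. Conversely, for $v \in \mathcal{N}_t \setminus \mathcal{N}_{t-1}$ with $v \neq v_t$ there exists $i < t$ with $|v - v_i| = t - i$, and the walk inequality $|\lambda(v) - \lambda(v_i)| \leq |v - v_i|$ then forces the walk from $v_i$ to $v$ to be monotone upward, so some neighbor of $v$ has $\lambda$-value $t-1$ and $v$ fails to be a local minimum. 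Consequently no source lies at level $T$, so $k \leq T - 1$; together with $T \leq k + 1$ this gives $T = k + 1$ for $T \geq 2$ (the case $T = 1$ being trivial).

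Given this, I would sort the sources along the path as $s_1 < s_2 < \cdots < s_k$ with $\lambda$-values $l_1, \ldots, l_k$ a permutation of $\{1, \ldots, k\}$. Between two consecutive sources the walk is a single up-then-down excursion through a peak of value $p_j \leq T$, contributing leg length $2p_j - l_j - l_{j+1} \leq 2T - l_j - l_{j+1}$; the initial and final legs are monotone with lengths at most $T - l_1$ and $T - l_k$. Summing gives
\[
n - 1 \leq (T - l_1) + \sum_{j=1}^{k-1}(2T - l_j - l_{j+1}) + (T - l_k) = 2Tk - 2\sum_{j=1}^{k} l_j = 2Tk - k(k+1),
\]
and substituting $k = T - 1$ yields $n \leq T(T-1) + 1 = T^2 - T + 1$. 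For achievability I would exhibit the ``zigzag'' construction with $v_1 = T$ and $v_{j+1} = v_j + (2T - 2j - 1)$ for $1 \leq j \leq T-2$, letting $\lambda$ descend from $T$ to $1$ on the initial leg, oscillate between consecutive sources via peaks of value $T$, and finally ascend from $v_{T-1}$ to value $T$ at position $n = T^2 - T + 1$.

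The hardest step will be the structural identification of sources with local minima; once that is in place the leg-length bookkeeping is routine. A minor subtlety is treating the boundary cases $s_1 = 1$ or $s_k = n$, which the leg-length formulas absorb into the terms $T - l_1$ and $T - l_k$.
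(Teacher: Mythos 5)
Your part (i) is essentially the paper's own argument: the same bound $n\le\sum_{i=1}^{k}\bigl(2(T-i)+1\bigr)$ split into the cases $T=k$ and $T=k+1$, and the same extremal tiling (your recursion $v_{i+1}=v_i+2(T-i)$ unfolds to the paper's closed-form source sequence). Part (ii) is correct but follows a genuinely different route. The paper rules out $T=k$ by showing that a neighbor of the last source would force $v_T\in U_T$, and then bounds $n$ by an overlap count: for $\lambda$ to be a homomorphism, consecutive neighborhoods $M_i,M_{i+1}$ must intersect, so at least $T-2$ vertices are counted twice and $n\le(T^2-1)-(T-2)$. You instead establish the sharper structural fact that the sources are exactly the local minima of $\lambda$ regarded as a $\pm1$ walk: one direction is the paper's ``$v_j\in U_j$'' argument applied to every source rather than only the last one, and the converse uses that a vertex entering $\mathcal N_t$ at distance exactly $t-i$ from $v_i$ is reached by a monotone leg. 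From this you get $k=T-1$ and then sum the excursion lengths $2p_j-l_j-l_{j+1}\le 2T-l_j-l_{j+1}$ between consecutive sources, arriving at $n-1\le 2Tk-k(k+1)=T^2-T$. What your approach buys is a rigorous justification of the step the paper only asserts, namely that consecutive neighborhoods must overlap and that one-vertex overlaps are optimal: in your bookkeeping this is automatic from the unimodality of each excursion. The one point you should make explicit is that the initial and final legs are monotone; this is not a separate observation but a consequence of your converse direction (a non-source endpoint of $P_n$ must have its unique neighbor one level lower, so the walk descends monotonically until it hits the first source). Both arguments land on $T^2-T+1$ with matching zigzag constructions.
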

\begin{proof}
(i) Firstly, we prove that  $n\le T^2$. By  Lemma \ref{l3.2}, we conclude 
that there are two possibilities for a burning  of a graph $P_n$ with  the end time $T$:

(1) a  burning of $P_n$ is given by a source sequence $S_{P_n}=(v_1,\dots, v_T)$ and 
\begin{equation}\label{3.6}
 \mathcal N_T= N_{T-1}(v_1)\Cup \dots \Cup N_{1}(v_{T-1})\Cup \mathcal N_0(v_T)=P_n,
\end{equation}
or, otherwise,  

(2)  a burning of $P_n$ is given by a source sequence $S_{P_n}=(v_1,\dots v_{T-1})$ and 
\begin{equation}\label{3.7}
 \ \mathcal N_{T}=N_{T-1}(v_1)\Cup  \dots \Cup N_{1}(v_{T-1})=P_n. \ \ \ \ \ \ \ \ 
\end{equation}
In the case (1), to obtain a maximum value $n$ we can take all the neighborhoods  in (\ref{3.6}) in such a way that pairwise distinct  neighborhoods in (\ref{3.6}) have empty intersection.  For a vertex  $v\in P_n$,  the maximal neighborhood 
$N_j(v)$  contains $2j+1$ vertices.  Hence, in the case (1),
\begin{equation}\label{3.8}
n\le \sum_{j=0}^{T-1}(2j+1)=T^2.
\end{equation}
In the case (2), using the same line of arguments   we obtain  that $n\le~T^2-1$.  Thus in both cases  $n\le T^2$. Finally, the burning sequence with $v_1=T$ and 
$$
v_j=\sum_{i=T-(j-1)}^{T-1}(2 i+1)+T-j+1 \ \ \text{for} \ \ 2\le j \le T,
$$
 defines the burning of the path graph  $P_{T^2}$.

(ii)   Now let $\lambda$ be a homomorphism. We consider this case similarly to the case (i).  At first we prove that the  equality (\ref{3.6}) leads to a contradiction. Note, that $n>1$, otherwise the statement is trivial. Since path graph $P_n$ is connected there is a vertex $v\in V_{P_n}$ such that $\{v, v_T\} \in E_{P_n}$. Moreover, $\lambda(v)=T-1$ since $\lambda$ is a homomorphism and $T$ is the maximal value of the map $\lambda$ on the set of vertices.  By Definition \ref{d3.1} we obtain
\begin{equation*}
v\in \mathcal N_{T-1}=N_{T-2}(v_1)\Cup N_{T-3}(v_2)\Cup \dots \Cup N_0(v_{T-1}). 
\end{equation*}
Thus, $v$ is a vertex of at least one of the graphs 
$N_{T-2}(v_1), \dots , N_0(v_{T-1})$.
  Since $d(v,v_T)=1$ we obtain that the vertex $v_T$ 
is the vertex of at least one of  the following  graphs 
$
N_{T-1}(v_1), \dots , N_1(v_{T-1})
$
and, hence, 
\begin{equation*}
v_T\in N_{T-1}(v_1)\Cup  N_{T-2}(v_2) \Cup \dots \Cup N_1(v_{T-1})= U_{T}. 
\end{equation*}
But, by Definition \ref{d3.1} and our assumption, $v_T\notin U_T$. Hence, we have obtained a contradiction in the case of the relation  (\ref{3.6}).

Thus,  we can consider only the relation  (\ref{3.7}). 
The vertices  of the set $S_{P_n}=\{v_1, \dots , v_{T-1}\}$ are pairwise distinct  elements of the set of vertices 
$V_{P_{n}}=\{1, \dots , n\}$ and hence they  have a natural  order "$<$". Thus we obtain a sequence of  vertices $w_i \in V_{P_n} \ (1\leq i\leq T-1)$ and a permutation  $\pi$  of elements $(1, \dots, T-1)$ such that 
\begin{equation}\label{3.9}
1\leq w_1< w_2< \dots <w_{T-1}\leq n     \ \ \ \text{and} \ \  w_i=v_{\pi(j)}. 
\end{equation} 
For any two consequent vertices $w_i< w_{i+1}$ in (\ref{3.9}),  we 
have two neighborhoods from (\ref{3.7})
$M_{i}=N_{T-\pi^{-1}(i)}(w_i)$  and $M_{i+1}=N_{T-\pi^{-1}(i+1)}(w_{i+1})$.   To obtain   a maximum value of $n$ from above,  these neighborhoods must have minimal intersections.  The case of $M_{i}\cap M_{i+1}=\emptyset$ is impossible since in this case the burning map $\lambda$  is not a homomorphism.   Let  $M_i\cap M_{i+1}$ consists from  exactly one vertex for $1\leq i\leq T-2$. Then  the neighborhoods in  (\ref{3.7}) have $T-2$ one-vertex intersections since  the number of elements of this cover is $T-1$.  Hence, by (2) of proof  (i), we obtain 
\begin{equation}\label{3.10}
n\le T^2-1-(T-2)=T^2-T+1. 
\end{equation}
In this case, the burning sequence with $v_1=T$ and 
$$
v_j=\sum_{i=T-(j-1)}^{T-1}(2 i)+T-j  \ \ \text{for} \ \  j\ge 2
$$
 defines the burning of the path graph  $P_{T^2-T+1}$.
\end{proof}

\begin{proposition}\label{p3.12}
Let $\mathbf B^T(P_n)=(\lambda, S_{P_n})$ be a burning for some path graph $P_n$ and $S_{P_n}=\{v_1, \dots, v_k\}$. Then the following holds:

 {\rm  (i)}  $n\le k^2+2k$ and  the equality is reached;

{\rm (ii)} if  in addition $\lambda$ is a homomorphism, then $n\le k^2+k+1$ and   the equality is reached.
\end{proposition}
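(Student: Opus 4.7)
The plan is to deduce both parts directly from Proposition~\ref{p3.11} by exchanging the role of the fixed parameter: here $k=|S_{P_n}|$ is fixed in place of the end time $T$. By Lemma~\ref{l3.2}, a burning with $k$ sources has end time $T\in\{k,k+1\}$, so the strategy is to apply the bounds of Proposition~\ref{p3.11} to each admissible value of $T$ and take the larger.

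For part~(i), if $T=k$ the burning falls into case~(1) of the proof of Proposition~\ref{p3.11} and gives $n\le T^2=k^2$; if $T=k+1$ it falls into case~(2) and gives $n\le T^2-1=(k+1)^2-1=k^2+2k$. The second bound dominates, hence $n\le k^2+2k$. To realize equality, I will exhibit an explicit burning of $P_{k^2+2k}$ with $k$ sources whose neighborhoods $N_{k}(v_1),N_{k-1}(v_2),\dots,N_{1}(v_k)$ are pairwise disjoint and together cover the whole path; their sizes sum to $\sum_{j=1}^{k}(2j+1)=k^2+2k$, by direct analogy with the extremal example constructed at the end of Proposition~\ref{p3.11}(i), applied with $T=k+1$ and with the last source $v_T$ suppressed.

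For part~(ii), the additional hypothesis that $\lambda$ is a homomorphism eliminates the case $T=k$. Indeed, the argument at the beginning of the proof of Proposition~\ref{p3.11}(ii), which derives a contradiction from relation~(\ref{3.6}), uses only that under a homomorphism some neighbor $v$ of the last source $v_T$ must satisfy $\lambda(v)=T-1$, which then forces $v_T\in U_T$, contradicting Definition~\ref{d3.1}; this rules out precisely the case $|S_{P_n}|=T$. Hence only $T=k+1$ remains, and Proposition~\ref{p3.11}(ii) yields $n\le T^2-T+1=(k+1)^2-(k+1)+1=k^2+k+1$. Equality is attained by the template construction from the end of Proposition~\ref{p3.11}(ii) with $T=k+1$, in which successive neighborhoods overlap in exactly one vertex (as needed to keep $\lambda$ a homomorphism), producing the burning of $P_{k^2+k+1}$.

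The main obstacle is purely bookkeeping: keeping the indexing consistent between the ``fixed $T$'' viewpoint of Proposition~\ref{p3.11} and the ``fixed $k$'' viewpoint required here, and verifying that the extremal constructions translate cleanly under the substitution $T\mapsto k+1$ with the last source suppressed. No new combinatorial ideas are required beyond those already established in Proposition~\ref{p3.11}.
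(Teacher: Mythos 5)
Your proposal is correct and follows essentially the same route as the paper: both reduce the statement to the two cases $T=k$ (relation~(\ref{3.6})) and $T=k+1$ (relation~(\ref{3.7})) from the proof of Proposition~\ref{p3.11}, take the dominating bound, and note that the homomorphism hypothesis excludes the case~(\ref{3.6}), yielding $n\le k^2+2k$ and $n\le k^2+k+1$ respectively with equality from the same extremal constructions. Your write-up is in fact somewhat more explicit than the paper's about how the extremal examples transfer under the substitution $T\mapsto k+1$.
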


\begin{proof}
{\rm  (i)} The proof follows from the proof of (i) in Proposition \ref{p3.11}. Indeed, in this case the burning time is either $T=k$ and \eqref{3.6} holds or the burning time is ${T=k+1}$ and \eqref{3.7} holds. Hence, $n=k^2$ or 
$n=T^2-1=k^2+2k$,  from where the statement follows.

{\rm  (ii)} can be treated similarly, using the proof of (ii) in Proposition \ref{p3.11}.
\end{proof}

\begin{corollary}\label{c3.13}  The burning number  $b(P_n)$ for the burning defined in Definition \ref{d3.1} of a path graph $P_n$ equals $\ceil{\sqrt n}$ where  $\ceil{r}\in \mathbb Z$ is  the smallest integer greater than or equal to  $r$. Moreover, this number coincides with the burning number  for the burning process defined in \cite{Burning_2014}, \cite[Th. 2.9]{Bonato_0} and \cite{Bonato}).
\end{corollary}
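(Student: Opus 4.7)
The plan is to extract the corollary almost directly from Proposition~\ref{p3.11}(i), which already packages the two ingredients we need: an upper bound on how long a path can be burned within a given end time, and an explicit construction achieving that bound.

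For the lower bound on $b(P_n)$, I would argue by contraposition. Suppose $P_n$ admits a burning $\mathbf{B}^T(P_n)$ with end time $T$. Then Proposition~\ref{p3.11}(i) forces $n \leq T^2$, hence $T \geq \sqrt{n}$. Since $T$ is an integer, $T \geq \lceil\sqrt{n}\rceil$. Minimizing over all burnings yields $b(P_n) \geq \lceil\sqrt{n}\rceil$.

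For the upper bound, set $T = \lceil\sqrt{n}\rceil$, so that $(T-1)^2 < n \leq T^2$. I need to exhibit a burning of $P_n$ with end time at most $T$. The construction in the proof of Proposition~\ref{p3.11}(i) gives a burning $(v_1,\dots,v_T)$ of $P_{T^2}$ in which the neighborhoods $N_{T-j}(v_j)$ partition the path into consecutive segments of lengths $2T-1, 2T-3, \dots, 1$. For $n < T^2$ I would reuse this packing by choosing $j^{*}$ to be the smallest index with $\sum_{j=1}^{j^*}(2(T-j)+1) \geq n$ and taking the first $j^*$ sources at the same positions, except possibly shifting the last source $v_{j^*}$ leftward so that its closed neighborhood of radius $T-j^*$ reaches exactly vertex $n$. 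Validity under Definition~\ref{d3.1} is clear because the previous sources are still at distance $> T-j^*+(j^*-i)$ from the shifted position, so $v_{j^*}\notin U_{j^*}$, and the union of the neighborhoods equals all of $P_n$ by construction, so either $\mathcal N_{j^*}=P_n$ (end time $j^*\le T$) or $\mathcal N_{j^*+1}=P_n$ (end time $j^*+1\le T$). This gives $b(P_n) \leq T = \lceil\sqrt{n}\rceil$.

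Combining the two inequalities gives $b(P_n) = \lceil\sqrt{n}\rceil$. For the last assertion that this agrees with the classical burning number, I would simply invoke \cite[Th.~2.9]{Bonato_0}, which establishes the same value $\lceil\sqrt{n}\rceil$ for the classical process; since both quantities equal $\lceil\sqrt{n}\rceil$, they coincide. I expect the only subtle step to be the upper-bound construction for $n < T^2$: verifying that the boundary adjustment of $v_{j^*}$ preserves the non-overlap condition $v_j \notin U_j$ and still yields full coverage. All remaining quantities are straightforward arithmetic on the neighborhood radii already tabulated in Proposition~\ref{p3.11}.
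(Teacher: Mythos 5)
Your overall strategy is the same as the paper's: Corollary~\ref{c3.13} is extracted from Proposition~\ref{p3.11}(i), with the lower bound $b(P_n)\ge\ceil{\sqrt n}$ coming from the inequality $n\le T^2$ and the upper bound from an adaptation of the explicit construction for $P_{T^2}$. The lower-bound half and the appeal to \cite[Th.~2.9]{Bonato_0} for the ``moreover'' clause are fine.

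However, your verification of the upper bound for $n<T^2$ contains a genuine error. You place $j^*$ sources so that $\bigcup_{j\le j^*}N_{T-j}(v_j)=P_n$ and then assert that ``either $\mathcal N_{j^*}=P_n$ (end time $j^*$) or $\mathcal N_{j^*+1}=P_n$ (end time $j^*+1$)''. This conflates the radii $T-j$ used in your covering with the radii that the burning actually produces: if the source sequence has only $k=j^*$ terms, then Definition~\ref{d3.1} requires $U_{j^*+1}=N_{j^*}(v_1)\Cup\dots\Cup N_1(v_{j^*})=P_n$, and these neighborhoods have radii $j^*+1-j\le T-j$, with strict inequality whenever $j^*<T-1$. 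Since $\sum_{j=1}^{T-1}(2(T-j)+1)=T^2-1\ge n$ forces $j^*\le T-1$ (and usually $j^*<T-1$), your $(v_1,\dots,v_{j^*})$ is in general not a valid burning sequence at all. Concretely, for $n=5$, $T=3$ you get $j^*=1$ and $v_1=3$; then $U_2=N_1(3)=\{2,3,4\}\ne P_5$, so $(3)$ is not a burning sequence and no completion of it ends before time $3$ --- contradicting your claimed end time $1$ or $2$. The conclusion $b(P_n)\le T$ survives, but the argument must be repaired: continue the sequence by adjoining arbitrary admissible sources $v_{j^*+1},v_{j^*+2},\dots$ (possible whenever $U_{j+1}\ne P_n$), and observe that after at most $T-1$ sources one has $U_T=N_{T-1}(v_1)\Cup\dots\Cup N_1(v_{T-1})\supseteq\bigcup_{j\le j^*}N_{T-j}(v_j)=P_n$, so the sequence terminates with $k\le T-1$ sources and end time at most $T$. (A smaller slip: your parenthetical that the earlier sources are at distance $>T-j^*+(j^*-i)=T-i$ from the shifted $v_{j^*}$ is false in general --- only the weaker and sufficient bound $d(v_i,v_{j^*})>j^*-i$ holds.)
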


\begin{proof} The fact  that $P_n$ can be burned in a time $\ceil{\sqrt n}$ and can not be burned faster follows from the proof of (i) in Proposition \ref{p3.11}. 
\end{proof}

There is  a natural question about minimal length $n$ of a path graph $P_n$ which  can be burned   by a source sequence of length $k\geq1$.

\begin{proposition}\label{p3.14}  {\rm (i) } The minimum value $n$, for which there exists a burning $(\lambda, S_{P_n})$ of a path graph 
   $P_n$  using $k$ sources, equals  $2k-1$.   

{\rm (ii)} If, additionally, $\lambda$  is a burning homomorphism, then the  minimum  value $n$, for which there exists a  burning $(\lambda, S_{P_n})$    using $k$ sources, equals  $3k-2$.  
\end{proposition}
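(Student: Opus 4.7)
The plan is to prove each bound by exhibiting an explicit burning that achieves equality and providing a matching lower bound argument.

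For part (i), I would construct a valid burning of $P_{2k-1}$ by taking sources (in burning order) $v_j = 2j - 1$ for $j = 1, \dots, k$. The source condition $v_j \notin U_j$ reduces to the pairwise distance inequalities $d(v_j, v_i) = 2(j - i) \geq j - i + 1$, and the covering condition $U_{k+1} = P_{2k-1}$ holds because every odd vertex is a source and every even vertex $2m$ (for $1 \leq m \leq k - 1$) is a neighbor of $v_m$. For the lower bound, I would extract from $v_j \notin U_j = N_{j-1}(v_1) \Cup \cdots \Cup N_1(v_{j-1})$ the inequality $d(v_i, v_j) \geq |i - j| + 1$ for all $i \neq j$, which in a path graph means $|v_i - v_j| \geq |i - j| + 1 \geq 2$. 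Sorting the sources along the path as $u_1 < u_2 < \cdots < u_k$, every consecutive pair satisfies $u_{l+1} - u_l \geq 2$, so $n \geq u_k \geq u_1 + 2(k - 1) \geq 2k - 1$.

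For part (ii), the construction is $v_j = 3j - 2$ in $P_{3k - 2}$, which satisfies $d(v_j, v_i) = 3(j - i) \geq j - i + 1$. I would then compute $\lambda$ from the formula $\lambda(w) = \min_j\bigl(d(w, v_j) + j\bigr)$ to obtain $\lambda(3m - 2) = m$, $\lambda(3m - 1) = m + 1$, $\lambda(3m) = m + 2$, and verify that adjacent vertices have $\lambda$-values differing by exactly one, so that $\lambda$ is a homomorphism onto $P_{k+1}$.

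The main obstacle is the lower bound for (ii). I would use two additional facts beyond part (i): first, by Proposition~\ref{p3.11}(ii) the end time is forced to be $T = k + 1$; second, since $\lambda$ is a homomorphism, the $\lambda$-values along any edge of $P_n$ differ by exactly one, so along any subpath the number of edges and the change in $\lambda$-value have the same parity. Writing the sources in path order as $u_l = v_{\pi(l)}$ for some permutation $\pi$ of $\{1, \dots, k\}$, the quantity $u_{l+1} - u_l$ then has the same parity as $\pi(l+1) - \pi(l)$; combined with $u_{l+1} - u_l \geq |\pi(l+1) - \pi(l)| + 1$ from part (i), this upgrades to $u_{l+1} - u_l \geq |\pi(l+1) - \pi(l)| + 2$. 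Summing over $l = 1, \dots, k - 1$ and using that the total variation $\sum_{l=1}^{k-1}|\pi(l+1) - \pi(l)| \geq k - 1$ for any permutation of $\{1, \dots, k\}$ (each ``level'' $m + \tfrac{1}{2}$, $1 \leq m \leq k - 1$, must be crossed at least once by the sequence of $\pi$-values), I conclude $n - 1 \geq u_k - u_1 \geq 2(k - 1) + (k - 1) = 3(k - 1)$, hence $n \geq 3k - 2$.
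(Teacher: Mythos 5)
Your proposal is correct and follows the same strategy as the paper: a lower bound coming from the forced separation between sources (at least $2$ in general, at least $3$ when $\lambda$ is a homomorphism, the latter via the parity of $\lambda$ along edges of $P_n$) together with the extremal constructions $v_j=2j-1$ and $v_j=3j-2$; you in fact supply several details the paper leaves implicit, such as the verification that the constructions are valid burnings and that the second one yields a homomorphism. One small simplification for the lower bound in (ii): since every pair of sources already satisfies $d(v_i,v_j)\ge |i-j|+2\ge 3$, sorting them along the path gives $n\ge 3(k-1)+1=3k-2$ directly, so the total-variation estimate over the permutation $\pi$ is not needed.
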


\begin{proof} (i)  Consider a burning $S_{P_n}=(v_1, \dots, v_k)\, (k\geq 1) $ and two consequent sources  $v_{j-1}, v_{j}$ of the burning. By Definition \ref{d3.1},  $v_j\notin U_j$ and, hence,  $v_j\notin~N_1(v_{j-1})$ by (\ref{3.2}). That is, the distance 
$d_{P_n}(v_j,v_{j-1})$ is greater or equal to two.   The minimal distance $d_{P_n}(v_j,v_{j-1})=2$ for every pair of consequent sources  is realized for the path graph $P_{2k-1}$ with the source sequence $v_i=2i-1$, $i=1,2,\dots, k$. 

(ii)  The consideration in similar to the case (i). In the case of a homomorphism $\lambda$ the distance 
$d_{P_n}(v_j,v_{j-1})$ is greater or equal to three. The minimal distance $d_{P_n}(v_j,v_{j-1})=3$ for every pair of consequent sources  is realized for the path graph $P_{3k-2}$ with the source sequence $v_i=3i-2$.
\end{proof}

\bigskip

Now we describe functorial relations between burnings of various graphs. Let $(\lambda_G, S_G)$ and  $(\lambda_H, S_H)$  be two  burnings  of  graphs $G$  and $H$, respectively,  with the source sequences  $S_G=(v_1, \dots, v_k)$  and $S_H=~(w_1, \dots, w_m)$  where $m\geq k\geq 1$. 
 For $2\leq j\leq  k+1$, let $U_j$   be the induced subgraph of $G$ defined in (\ref{3.2}).  For $2\leq j\leq  k+1$,  let 
$W_j\sqsubset H $  be the induced subgraph of $H$ defined  by
\begin{equation}\label{3.11}
W_j=N_{j-1} (w_1)\Cup N_{j-2}(w_2)\Cup \dots \Cup N_1(w_{j-1}).
\end{equation}

\begin{definition}\label{d3.15} \rm  Let  $G$, $H$ be graphs and
$\mathbf B(G)$,  $\mathbf B(H)$ be two  burnings  as above.
  A graph map $f\colon G\to H$ is called a \emph{morphism of the burnings}, and we write $f\colon \mathbf B(G)\to \mathbf B(H)$,   if  $k\leq m$,    $f(v_i)=w_i$ for  $1\leq i\leq k$,  and  there is a  commutative diagram 
of graph maps
\begin{equation}\label{3.12}
\begin{matrix}
G &\overset{f}\longrightarrow & H\\
\lambda_G\downarrow\ \ \  \ &&\ \ \ \downarrow\lambda_H\\
P_{T^G} &\overset {\tau}\longrightarrow & P_{T^H}, \\
\end{matrix}
\end{equation}
where $T^G$ and $T^H$ are the burning times for the burnings  of  the graphs $G$ and $H$,  respectively. Hence, in particular,  $f(U_j)\subset W_j$ for $2\leq j\leq k+1$.
\end{definition}

Now we  describe the properties of burning morphisms and afterwards consider   several examples.

\begin{proposition}\label{p3.16} Let $f\colon (\lambda_G, S_G)\to (\lambda_H, S_H)$  be a morphism of graph burnings with $S_G=(v_1,\dots, v_k)$ and $S_H=(w_1, \dots , w_m)$.  If the end time of the burning of the graph $G$ is $k$, then the graph map $\tau$ in diagram \eqref{3.12} is an inclusion. 
\end{proposition}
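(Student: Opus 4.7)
The plan is to use commutativity of diagram~\eqref{3.12} on the source vertices to pin down $\tau$ completely. By Lemma~\ref{l3.2}, since $v_j$ is the source at time $j$, we have $\lambda_G(v_j) = j$ for $1 \leq j \leq k$; likewise $\lambda_H(w_j) = j$ for $1 \leq j \leq m$. Now, for each $1 \leq i \leq k$, using $f(v_i) = w_i$ and the commutativity of \eqref{3.12},
\begin{equation*}
\tau(i) \;=\; \tau(\lambda_G(v_i)) \;=\; \lambda_H(f(v_i)) \;=\; \lambda_H(w_i) \;=\; i.
\end{equation*}
So the vertex map of $\tau$ is the identity on $\{1, \dots, k\}$, i.e.\ the canonical inclusion $\{1, \dots, k\} \hookrightarrow \{1, \dots, T^H\}$. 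Since the edges of $P_k$ are exactly $\{i, i+1\}$ for $1 \leq i \leq k-1$ and $\tau(i) = i \neq i+1 = \tau(i+1)$, each such edge maps bijectively to the edge $\{i, i+1\}$ of $P_{T^H}$, so $\tau$ is a graph inclusion, as required.

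The hypothesis $T^G = k$ is what makes this argument work cleanly: by Lemma~\ref{l3.2} it guarantees that $V_{P_{T^G}} = \{1, \dots, k\}$, so every vertex of $P_{T^G}$ is already of the form $\lambda_G(v_i)$ for some source $v_i$, and hence the condition $f(v_i) = w_i$ pins down $\tau$ everywhere. The only delicate point — and arguably the main obstacle to a stronger statement — is that if one dropped this hypothesis and instead had $T^G = k+1$, then the value $k+1 \in V_{P_{T^G}}$ would be attained only by non-source vertices, and the displayed computation would leave $\tau(k+1)$ a priori undetermined. Thus the restriction $T^G = k$ is essential to the proof, and the alternative case $T^G = k+1$ would genuinely require separate handling.
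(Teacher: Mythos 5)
Your proof is correct and follows essentially the same route as the paper: the identical computation $\tau(i)=\tau(\lambda_G(v_i))=\lambda_H(f(v_i))=\lambda_H(w_i)=i$ is the heart of the paper's argument, with the hypothesis $T^G=k$ used exactly as you use it, to ensure the sources exhaust the vertices of $P_{T^G}$. Your added remarks (verifying the edge condition explicitly, and noting that $\lambda_G(v_j)=j$ comes from the proof of Lemma~\ref{l3.2}) only make the same argument slightly more explicit.
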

\begin{proof} Let $S_G=(v_1,\dots, v_k)$ and $S_H=(w_1, \dots , w_m)$ where $k\leq m$ by Definition \ref{d3.15}. It follows from the commutativity of diagram \eqref{3.12} that on the set of vertices  $\{1,\dots, k\}\subset V_{P_{T^G}}$ the function $\tau$ is an inclusion given by 
\begin{equation}\label{3.13}
\tau(i) =\tau(\lambda_G(v_i))=\lambda_H(f(v_i))= \lambda_H(w_i)=i.
\end{equation}
Hence, $\tau$ is an inclusion for the case $T^G=k$.   
\end{proof} 

\begin{lemma}\label{l3.17} {\rm (i) } Let $(\lambda_G, S_G)$ be a burning of a graph $G$ and $\operatorname{Id}_G \colon G\to G$ be the identity  map. Then $\operatorname{Id}_G$ defines  the identity morphism of the burnings   
$(\lambda_G, S_G) \to  (\lambda_G, S_G) $.

{\rm (ii) } Let $f\colon (\lambda_G, S_G)\to (\lambda_H, S_H)$ and $g\colon  (\lambda_H, S_H)\to  (\lambda_K, S_K)$ be two  morphisms of graph burnings.  Then the composition $g\circ f$ is a  morphism $(\lambda_G, S_G)\to  (\lambda_K, S_K)$
of graph burnings. 
\end{lemma}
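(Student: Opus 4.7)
My plan is to verify both parts directly against Definition~\ref{d3.15}, which imposes three conditions on a candidate burning morphism: the length comparison $k\leq m$ on source sequences, the matching of sources $f(v_i)=w_i$, and the commutativity of the square involving the burning maps $\lambda_G,\lambda_H$ and a graph map $\tau\colon P_{T^G}\to P_{T^H}$.

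For part (i), I take the candidate $\tau=\operatorname{Id}_{P_{T^G}}$. The length comparison is trivial since $k\leq k$, and $\operatorname{Id}_G(v_i)=v_i$ for $1\leq i\leq k$ by definition. Commutativity of the square
\[
\begin{matrix}
G &\overset{\operatorname{Id}_G}\longrightarrow & G\\
\lambda_G\downarrow\ \ \  \ &&\ \ \ \downarrow\lambda_G\\
P_{T^G} &\overset {\operatorname{Id}_{P_{T^G}}}\longrightarrow & P_{T^G}
\end{matrix}
\]
is obvious, so all conditions of Definition~\ref{d3.15} hold.

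For part (ii), suppose $f\colon\mathbf{B}(G)\to\mathbf{B}(H)$ comes with $\tau_f\colon P_{T^G}\to P_{T^H}$ and $g\colon\mathbf{B}(H)\to\mathbf{B}(K)$ comes with $\tau_g\colon P_{T^H}\to P_{T^K}$, where $S_G=(v_1,\dots,v_k)$, $S_H=(w_1,\dots,w_m)$, $S_K=(u_1,\dots,u_\ell)$ with $k\leq m\leq\ell$. The candidate for the composition is $\tau_{g\circ f}:=\tau_g\circ\tau_f$. Chaining the source conditions gives $(g\circ f)(v_i)=g(f(v_i))=g(w_i)=u_i$ for $1\leq i\leq k$, and $k\leq\ell$ follows by transitivity. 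For commutativity I paste the two commutative squares supplied by $f$ and $g$ to obtain
\[
\begin{matrix}
G &\overset{f}\longrightarrow & H&\overset{g}\longrightarrow & K\\
\lambda_G\downarrow\ \ \  \ &&\ \ \ \downarrow\lambda_H&&\ \ \ \downarrow\lambda_K\\
P_{T^G} &\overset {\tau_f}\longrightarrow & P_{T^H}&\overset {\tau_g}\longrightarrow & P_{T^K},
\end{matrix}
\]
which upon composing horizontally yields $\lambda_K\circ(g\circ f)=(\tau_g\circ\tau_f)\circ\lambda_G$, as required. Since composition of graph maps is again a graph map (by Definition~\ref{d2.3}, which preserves edges up to collapsing), both $g\circ f$ and $\tau_g\circ\tau_f$ are graph maps.

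There is no genuine obstacle here; the statement is essentially a standard check that morphisms in a slice-type category are closed under identities and composition. The only thing to be a little careful about is making the source-matching condition propagate through the composition (using that $k\leq m$ guarantees $w_i$ is defined and equal to $f(v_i)$ for each $i\leq k$), and recording that $\tau_g\circ\tau_f$ is the graph map witnessing the commutativity required by Definition~\ref{d3.15}.
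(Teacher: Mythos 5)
Your proposal is correct and follows essentially the same route as the paper: part (i) is the immediate verification with $\tau=\operatorname{Id}_{P_{T^G}}$, and part (ii) is the horizontal pasting of the two commutative squares together with the chained source condition $(g\circ f)(v_i)=z_i$. The only difference is that you are slightly more explicit about naming the witnessing map $\tau_g\circ\tau_f$ and noting it is a graph map, which the paper leaves implicit.
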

\begin{proof} 
 {\rm (i) } It follows immediately from  Definition \ref{d3.15}.  
 
{\rm (ii) } Let $S_G=(v_1,\dots, v_k)$, $S_H=(w_1, \dots , w_m)$, and 
$S_K=(z_1, \dots , z_l)$. Then   $1\leq k\leq m\leq l$ by Definition \ref{d3.15}. Moreover, 
$$
(g\circ f)(v_i)= g(f(v_i))=g(w_i)=z_i \ \ \text{for} \ \ 1\leq i\leq k.
$$
Consider the diagram 
\begin{equation}\label{3.14}
\begin{matrix}
G &\overset{f}\longrightarrow & H&\overset{g}\longrightarrow & K\\
\lambda_G\downarrow\ \ \  \ &&\ \ \ \downarrow\lambda_H && \ \ \ \downarrow\lambda_K\\
P_{T^G} &\longrightarrow & P_{T^H} &\longrightarrow & P_{T^K}. \\
\end{matrix}
\end{equation}
in which the left square is commutative since $f$ is a burning morphism and the right square is commutative since $g$ is a burning morphism. Hence diagram 
(\ref{3.14}) is commutative and the statement is proved. 
\end{proof}

\begin{theorem}\label{t3.18}  There is a category $\mathfrak{B}$ whose objects are graph burnings  and whose morphisms are morphisms of graph burnings defined in Definition~\ref{d3.15}.
\end{theorem}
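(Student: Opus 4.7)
The plan is to verify the three category axioms by reducing almost everything to Lemma~\ref{l3.17} and to the fact, already established in Section~\ref{S2}, that $\mathbf G$ is a category of graphs and graph maps. The key point is that a burning morphism is, by Definition~\ref{d3.15}, a graph map with extra properties (respecting the source sequence and commuting with the burning maps into path graphs), so $\mathfrak B$ should be visibly a subcategory of $\mathbf G$ once these extra properties are shown to be preserved by identities and composition.

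First I would take the class of objects of $\mathfrak B$ to be the class of all pairs $\mathbf B(G)=(\lambda_G,S_G)$, where $G$ is a finite graph and $(\lambda_G,S_G)$ is a burning in the sense of Definition~\ref{d3.1} with $\lambda_G$ the induced graph map of Theorem~\ref{t3.5}. For any two objects $\mathbf B(G)$ and $\mathbf B(H)$, define the hom-set $\operatorname{Hom}_{\mathfrak B}(\mathbf B(G),\mathbf B(H))$ to be the set of morphisms of burnings $f\colon \mathbf B(G)\to \mathbf B(H)$ as in Definition~\ref{d3.15}. These hom-sets are well-defined sets since each such $f$ is in particular a morphism in $\mathbf G$.

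Next I would verify the axioms in order. Existence of identities is Lemma~\ref{l3.17}(i): $\operatorname{Id}_G\colon G\to G$ is a morphism $\mathbf B(G)\to \mathbf B(G)$ because it fixes the source sequence pointwise and the associated square~\eqref{3.12} commutes with $\tau=\operatorname{Id}_{P_{T^G}}$. Closure under composition is Lemma~\ref{l3.17}(ii), so the composition law
\[
\circ\colon \operatorname{Hom}_{\mathfrak B}(\mathbf B(H),\mathbf B(K))\times \operatorname{Hom}_{\mathfrak B}(\mathbf B(G),\mathbf B(H))\to \operatorname{Hom}_{\mathfrak B}(\mathbf B(G),\mathbf B(K))
\]
is defined simply as composition of the underlying graph maps. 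Associativity and the identity laws then come for free: if $f,g,h$ are composable burning morphisms, then $(h\circ g)\circ f = h\circ (g\circ f)$ and $f\circ \operatorname{Id}_G=f=\operatorname{Id}_H\circ f$ as graph maps, by associativity and the unit laws in $\mathbf G$; since both sides are already known to be burning morphisms by Lemma~\ref{l3.17}, the equalities hold in $\mathfrak B$ as well.

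The only non-automatic point is to double-check that the composition of the burning-map squares~\eqref{3.12} behaves correctly on the path-graph side. I would note explicitly that if $f\colon \mathbf B(G)\to \mathbf B(H)$ and $g\colon \mathbf B(H)\to \mathbf B(K)$ induce maps $\tau_f\colon P_{T^G}\to P_{T^H}$ and $\tau_g\colon P_{T^H}\to P_{T^K}$ making the two squares in diagram~\eqref{3.14} commute, then the composite outer rectangle commutes with $\tau_{g\circ f}=\tau_g\circ \tau_f$, and this $\tau_{g\circ f}$ is the unique graph map $P_{T^G}\to P_{T^K}$ compatible with $g\circ f$ on sources, thanks to the uniqueness in Theorem~\ref{t3.5}. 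I do not anticipate a real obstacle here; the entire theorem is a bookkeeping statement whose substance has been carried out in Lemma~\ref{l3.17}, and the main thing to be careful about is making the appeal to the ambient category $\mathbf G$ explicit so that associativity and the identity laws are not reproved from scratch.
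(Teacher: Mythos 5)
Your proposal is correct and follows essentially the same route as the paper, which simply cites the well-definedness of the category of graphs and graph maps together with Lemma~\ref{l3.17}; your version just spells out the bookkeeping (hom-sets, associativity and unit laws inherited from $\mathbf G$, and the composition of the squares~\eqref{3.12}) in more detail.
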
 
\begin{proof} The category of graphs and graph maps is well defined. Now the proof follows from Lemma \ref{l3.17}. 
\end{proof}

\begin{example}\label{e3.19} \rm Consider a map $f\colon G\to H$ of graphs in Figure \ref{map1} which is given by $f(v_0)=w_0, f(v_1)=w_1, f(v_2)=f(v_3)=f(v_4)=w_2$.
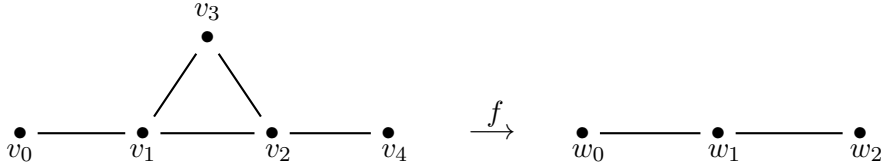
\begin{figure}[H]
\centering
\begin{tikzpicture}[scale=0.85]
\node (4) at  (1.1, 3) {$\bullet$};
\node (aa) at  (1.1, 2.7) {$v_0$};
\node (5) at  (6.8, 3) {$\bullet$};
\node (ba) at  (6.9, 2.7) {$ v_4$};

\node (0) at  (5, 3) {$\bullet$};
\node (0a) at  (5.1, 2.7) {$v_2$};
\node (1) at (3,3) {$\bullet$};
\node (1a) at (3,2.7) {$ v_1$};
\node (2) at (4,4.5) {$\bullet$};
\node (3) at (4,4.9) {$v_3$};
\draw (0) edge[ color=black!120, thick, -] (2);
\draw (0) edge[ color=black!120, thick, -] (1);
\draw (1) edge[ color=black!120, thick, -] (2);

\draw (1) edge[ color=black!120, thick, -] (4);
\draw (5) edge[ color=black!120, thick, -] (0);

\node (00) at (8.4,3) {$\longrightarrow$};
\node (11) at (8.45,3.3) {$f$};

\node (6) at  (9.8, 3) {$\bullet$};
\node (6b) at  (9.9, 2.7) {$ w_0$};
\node (7) at  (11.9, 3) {$\bullet$};
\node (7b) at  (12, 2.7) {$w_1$};
\node (8) at  (14.1, 3) {$\bullet$};
\node (8b) at  (14.2,  2.7) {$w_2$};
\draw (6) edge[ color=black!120, thick, -] (7);
\draw (7) edge[ color=black!120, thick, -] (8);
\end{tikzpicture}
  \caption{The map $f$ in Example \ref{e3.19}.}
\label{map1}
\end{figure}
Let  $\mathbf B(G)=(\lambda_G, S_G)$ where $S_G=(v_0, v_2)$ and $\mathbf B(H)=(\lambda_H, S_H)$ where $S_H=(w_0, w_2)$. Then $T_G=3$ and $T_H=2$. The map $f$ is a morphism of burnings $f\colon \mathbf B(G)\to \mathbf B(H)$ and the  graph map   $\tau\colon P_3\to P_2$ is given on the set of vertices by $\tau(1)=1, \tau(2)=\tau(3)=2$. 
\end{example} 

\begin{definition}\label{d3.20} \rm (i) Let  $G$ be a connected graph   and $H$ be a   connected subgraph  given with an inclusion $ i\colon H\to G$. Let $\mathbf B=(\lambda_G, S_G)$ be a burning of $G$.   The \emph{subgraph $H$ is $\mathbf B$-burned}  if there is a burning $(\lambda_H, S_H)$ of $H$ such that  $\lambda_H=\lambda_G|_{H}$ and the inclusion $i$ is a morphism of burnings 
$ (\lambda_H, S_H) \to (\lambda_G, S_G)$. 

(ii) The $\mathbf B$-burned  subgraph $H\subset G$ is \emph{$\mathbf B$-burned minimal}  if there is no a $\mathbf B$-burned subgraph $H_1\subset G$ such that  $H_1\varsubsetneq H$.  
\end{definition} 

\begin{example}\label{e3.21} \rm  Consider the graph $G$ in Figure \ref{BG}.  
\begin{figure}[H]
\centering
\begin{tikzpicture}[scale=0.85]
\node (4) at  (1.1, 3) {$\bullet$};
\node (aa) at  (1.1, 2.7) {$v_0$};
\node (5) at  (6.8, 3) {$\bullet$};
\node (ba) at  (6.9, 2.7) {$ v_5$};
\node (6) at  (8.9, 3) {$\bullet$};
\node (6ba) at  (9, 2.7) {$ v_6$};

\node (0) at  (5, 3) {$\bullet$};
\node (0a) at  (5.1, 2.7) {$v_2$};
\node (1) at (3,3) {$\bullet$};
\node (1a) at (3,2.7) {$ v_1$};
\node (2) at (5,4) {$\bullet$};
\node (3) at (5.1,4.4) {$v_3$};
\node (2b) at (5,1.8) {$\bullet$};
\node (3b) at (5.1,1.5) {$v_4$};

\draw (5) edge[ color=black!120, thick, -] (6);
\draw (5) edge[ color=black!120, thick, -] (2);
\draw (0) edge[ color=black!120, thick, -] (1);
\draw (1) edge[ color=black!120, thick, -] (2);

\draw (1) edge[ color=black!120, thick, -] (2b);
\draw (1) edge[ color=black!120, thick, -] (4);
\draw (5) edge[ color=black!120, thick, -] (0);
\draw (5) edge[ color=black!120, thick, -] (2b);

\end{tikzpicture}
  \caption{Graph $G$ in  Example \ref{e3.21}.}

\label{BG}
\end{figure}
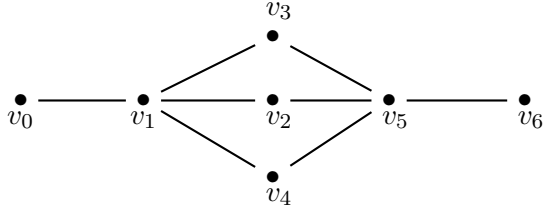
The  burning $\mathbf B(G)=(\lambda_G, S_G)$ is defined by $S_G=(v_0, v_5)$. The  function $\lambda=\lambda_G$ is given on the set of vertices by 
$
\lambda(v_0)=1, \ \lambda(v_1)=\lambda(v_5)=2$ and $\lambda(v_2)=\lambda(v_3)=\lambda(v_4)=\lambda(v_6)=3.
$
All the minimal $\mathbf B$-burned subgraphs $H_1$, $H_2$ and $H_3$ of $G$ are presented in Figure~\ref{MB1}.

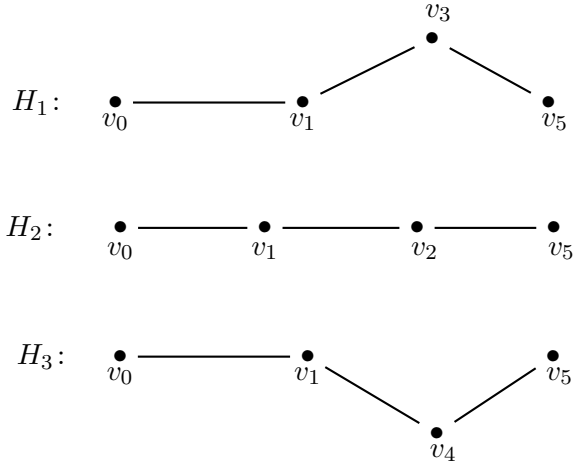
\begin{figure}[H]
\centering
\begin{tikzpicture}[scale=0.85]
\node (H1) at  (8.1, 3) {$H_1\colon \ \ $};
\node (4x) at  (9.1, 3) {$\bullet$};
\node (aax) at  (9.1, 2.7) {$v_0$};
\node (1x) at (12,3) {$\bullet$};
\node (1ax) at (12,2.7) {$ v_1$};
\node (2x) at (14,4) {$\bullet$};
\node (3x) at (14.1,4.4) {$v_3$};
\node (5x) at  (15.8, 3) {$\bullet$};
\node (bax) at  (15.9, 2.7) {$ v_5$};
\draw (1x) edge[ color=black!120, thick, -] (4x);
\draw (1x) edge[ color=black!120, thick, -] (2x);
\draw (5x) edge[ color=black!120, thick, -] (2x);
\end{tikzpicture}
\bigskip
\bigskip

\begin{tikzpicture}
\node (H2) at  (0.1, 3) {$H_2\colon \ \ $};
\node (4) at  (1.1, 3) {$\bullet$};
\node (aa) at  (1.1, 2.7) {$v_0$};
\node (1) at (3,3) {$\bullet$};
\node (1a) at (3,2.7) {$ v_1$};
\node (0) at  (5, 3) {$\bullet$};
\node (0a) at  (5.1, 2.7) {$v_2$};
\node (5) at  (6.8, 3) {$\bullet$};
\node (ba) at  (6.9, 2.7) {$ v_5$};
\draw (1) edge[ color=black!120, thick, -] (4);
\draw (0) edge[ color=black!120, thick, -] (1);
\draw (5) edge[ color=black!120, thick, -] (0);
\end{tikzpicture}
\bigskip
\bigskip

\begin{tikzpicture}[scale=0.85]
\node (H3) at  (8.1, 3) {$H_3\colon \ \ $};
\node (4x) at  (9.1, 3) {$\bullet$};
\node (aax) at  (9.1, 2.7) {$v_0$};
\node (1x) at (12,3) {$\bullet$};
\node (1ax) at (12,2.7) {$ v_1$};
\node (2x) at (14,1.8) {$\bullet$};
\node (3x) at (14.1,1.5) {$v_4$};
\node (5x) at  (15.8, 3) {$\bullet$};
\node (bax) at  (15.9, 2.7) {$ v_5$};
\draw (1x) edge[ color=black!120, thick, -] (4x);
\draw (1x) edge[ color=black!120, thick, -] (2x);
\draw (5x) edge[ color=black!120, thick, -] (2x);
\end{tikzpicture}
 \caption{The minimal $\mathbf B$-burned subgraphs $H_1, H_2, H_3$ in Example \ref{e3.21}.}
\label{MB1}
\end{figure}

\end{example} 

\begin{theorem}\label{t3.22} Let $\mathbf B=(\lambda, S_G)$ be a  burning of a graph $G$. Then every minimal $\mathbf B$-burned subgraph is a tree. 
\end{theorem}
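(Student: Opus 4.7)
The subgraph $H$ is connected by Definition \ref{d3.20}(i), so it suffices to show that $H$ is acyclic. I argue by contradiction: assuming $H$ contains a cycle, I will construct a spanning tree $T$ of $H$ with the same vertex set that is again $\mathbf{B}$-burned. A cycle in $H$ forces $|E_H|\geq |V_H|$, while a spanning tree has $|E_T|=|V_H|-1$, so $T\varsubsetneq H$ and the minimality of $H$ is violated.

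To build $T$, I use that for each non-source vertex $v\in V_H$ with $\lambda_H(v)=t$, the equality $\lambda_H(v)=\lambda_G(v)$ forces the existence of some source $v_j\in S_H$ with $j+d_H(v,v_j)=t$; the first edge of a shortest $H$-path from $v$ to $v_j$ then supplies a neighbor $p(v)$ of $v$ in $H$ with $\lambda_H(p(v))=t-1$ witnessing the same source $v_j$ (Lemma \ref{l3.4} forces equality on the $\leq$ side). Let $F$ be the subgraph of $H$ with vertex set $V_H$ and edge set $\{\,\{v,p(v)\}:v\in V_H\setminus S_H\}$. Then $F$ is a forest with exactly $|S_H|$ components, one rooted at each source, and iterating $p$ from any $v$ down to the witnessing source $v_j$ gives a path in $F$ of length $t-j=d_G(v,v_j)$. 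In particular $d_F(v,v_j)=d_G(v,v_j)$, whence $\lambda_F=\lambda_G|_{V_H}$. Since $H$ is connected, contracting each component of $F$ to a point produces a connected multigraph on $|S_H|$ vertices; the edges of any spanning tree of this contraction lift to a set of $|S_H|-1$ edges in $E_H\setminus E_F$ whose addition to $F$ yields a spanning tree $T$ of $H$.

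It remains to verify that $(\lambda_G|_{V_H},S_H)$ is a burning of $T$ and that the inclusion $T\hookrightarrow G$ is a morphism of burnings. The inclusions $F\subseteq T\subseteq H\subseteq G$ give $d_G\leq d_T\leq d_F$ on $V_T$, hence $\lambda_G\leq \lambda_T\leq \lambda_F=\lambda_G$, and so $\lambda_T=\lambda_G|_{V_T}$. The source-freshness axiom $v_j\notin U_j$ for $2\leq j\leq |S_H|$ of Definition \ref{d3.1} carries over from $H$ to $T$ because closed neighborhoods in $T$ are contained in the corresponding ones in $H$, so $U_j(T)\subseteq U_j(H)$; and the termination condition $U_{|S_H|+1}(T)=T$ holds because $\lambda_T\leq |S_H|+1$ by Lemma \ref{l3.2}. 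The commutative square of Definition \ref{d3.15} then closes with the bottom map $P_{T^T}\hookrightarrow P_{T^G}$ being the natural inclusion, so $T$ is $\mathbf{B}$-burned with $T\varsubsetneq H$, contradicting the minimality of $H$. The main technical point I expect to require care is the calibration of $F$: the parents $p(v)$ must be chosen so that each parent chain is a $G$-shortest path to a source rather than just a $\lambda$-monotone walk, which is why the equality $\lambda_H=\lambda_G|_{V_H}$ enters essentially; once $F$ has this property, the rest is bookkeeping with monotonicity of closed neighborhoods under subgraph inclusion.
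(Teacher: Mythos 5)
Your proof is correct, but it takes a genuinely different route from the paper's. The paper argues locally: it picks a cycle $C$ in a putative non-tree $H$, takes a vertex $v_k$ on $C$ where $\lambda$ is maximal, observes via Lemma \ref{l3.4} that both cycle-neighbors $x,y$ of $v_k$ satisfy $\lambda\in\{t-1,t\}$ so that no witnessing shortest path from a source leaves $v_k$ through $\{v_k,x\}$ or $\{v_k,y\}$, and deletes one of these two edges to obtain a smaller $\mathbf B$-burned subgraph. You instead rebuild globally: from the identity $j+d_H(v,v_j)=\lambda(v)$ you extract a parent function $p$ decreasing $\lambda$ by exactly one, assemble the parent edges into a spanning forest $F$ with one component per source, and complete $F$ to a spanning tree $T\varsubsetneq H$ that still realizes all burning distances. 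Your version is longer but proves a stronger structural fact (every $\mathbf B$-burned subgraph contains a $\mathbf B$-burned spanning tree on the same vertex set) and sidesteps the delicate choice, implicit in the paper, of \emph{which} of the two cycle edges at $v_k$ may safely be deleted; the paper's version is shorter but rests on a terse justification of why the edge-deleted graph remains $\mathbf B$-burned. Two small points to tighten in yours: the parent chain from $v$ need not terminate at the source $v_j$ you originally chose to witness $v$ (each vertex picks its own witness), but it terminates at \emph{some} source $v_{j_0}$ after exactly $\lambda(v)-j_0$ steps, which is all that is needed for $\lambda_F\le\lambda_G$ and hence for $\lambda_F=\lambda_G|_{V_H}$; and the bound $\lambda_T\le |S_H|+1$ should be deduced from $\lambda_T=\lambda_H$ together with $U_{|S_H|+1}(H)=H$, rather than by citing Lemma \ref{l3.2} for $T$, since that lemma presupposes what you are in the course of verifying.
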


\begin{proof}  Let $P_T$ be the image 
of $\lambda$ and $H\subset G$ be a minimal  $\mathbf B$-burned subgraph.    
Let us suppose, that $H$ is not a tree. Since $H$ is connected it   contains a
cycle $c=(v_0, a_1, \dots , v_{n-1}, a_n, v_0)$. Let $C\subset H$ be  the subgraph that is given by the cycle $c$. The restriction 
$\lambda|_{C}\colon C\to P_T$  is a graph map to the path graph and, hence, the image contains a maximal vertex $t=\lambda(v_k)\in V_{P_T}$ for a vertex  $v_k\in V_C$.  Since $C$ is a cycle, there exist  two distinct edges 
$\{x,v_k\},\{v_k,y\}\in E_C$ which are incident to $v_k$.  Since $t$ is maximum $\lambda_C$ on the set of vertices and by Lemma \ref{l3.4}, we obtain that 
$t-1\leq \lambda_C(x), \lambda_C(y)\leq t=\lambda_C(v_k)$.  Thus  burnings of the vertices $x$ and $y$ are not induced by the burning of the vertex $v$. Now we  delete  one of the edges  $\{x,v_k\},\{v_k,y\}$  from the graph $H$. Thus   we obtain  a  connected  subgraph $H_1\subsetneqq H$ with the same burning sequence $S_G$.  We obtain a contradiction with the condition that  $H$ is minimal.  The Theorem is proved. 
\end{proof}
\begin{example}\label{e3.23} \rm  Consider the  graph  $G$ in Figure \ref{Tr} with the burning $\mathbf B(G)$ that is given by $S_G=(v_0, v_4, v_6)$. Then there are  three  minimal $\mathbf B$-burning subgraphs $H_1$, $H_2$, and $H_3$ which  are induced subgraphs given by following sets  of vertices:
$V_{H_1}=\{v_0, v_1, v_2, v_3, v_4,v_6\}$,  $V_{H_2}=\{v_0, v_1, v_2, v_4, v_5,v_6\}$,  and $V_{H_3}=\{v_0, v_1, v_3, v_4, v_5,v_6\}$, respectively. The graphs $H_1$ and $H_2$ are not isomorphic and they are presented in Figure \ref{H1}. 
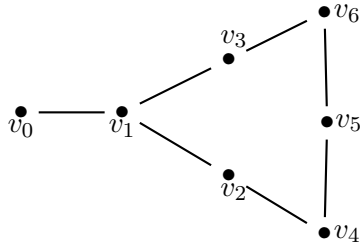
\begin{figure}[H]
\centering
\begin{tikzpicture}[scale=0.7]
\node (4) at  (1.1, 3) {$\bullet$};
\node (aa) at  (1.1, 2.7) {$v_0$};
\node (5) at  (6.8, 4.9) {$\bullet$};
\node (ba) at  (7.15, 4.88) {$\;v_6$};
\node (6) at  (6.8, 0.7) {$\bullet$};
\node (6ba) at  (7.15, 0.7) {$\;v_4$};
\node (1) at (3,3) {$\bullet$};
\node (1a) at (3,2.7) {$ v_1$};
\node (2) at (5,4) {$\bullet$};
\node (3) at (5.1,4.4) {$v_3$};
\node (2b) at (5,1.8) {$\bullet$};
\node (3b) at (5.1,1.5) {$v_2$};
\node (c) at  (6.85, 2.8) {$\bullet$};
\node (c1) at  (7.18, 2.8) {$\;v_5$};
\draw (5) edge[ color=black!120, thick, -] (2);
\draw (1) edge[ color=black!120, thick, -] (2);
\draw (1) edge[ color=black!120, thick, -] (2b);
\draw (1) edge[ color=black!120, thick, -] (4);
\draw (6) edge[ color=black!120, thick, -] (2b);
\draw (5) edge[ color=black!120, thick, -] (c);
\draw (6) edge[ color=black!120, thick, -] (c);
\end{tikzpicture}
  \caption{The graph $G$ in Example \ref{e3.23}. }
\label{Tr}
\end{figure}
\begin{figure}[H]
\centering
\begin{tikzpicture}[scale=0.8]
\node (4) at  (1.1, 3) {$\bullet$};
\node (aa) at  (1.1, 2.7) {$v_0$};
\node (1) at (2.5,3) {$\bullet$};
\node (1a) at (2.5,2.7) {$ v_1$};
\node (2b) at (3.9,2.3) {$\bullet$};
\node (3b) at (3.9,2) {$v_2$};
\node (2) at (3.9,3.5) {$\bullet$};
\node (3) at (3.9,3.9) {$v_3$};
\node (6) at  (5.3, 1.6) {$\bullet$};
\node (6ba) at  (5.67, 1.6) {$\; v_4$};
\node (5) at  (5.3, 3.95) {$\bullet$};
\node (ba) at  (5.67, 3.95) {$\; v_6$};
\draw (5) edge[ color=black!120, thick, -] (2);
\draw (1) edge[ color=black!120, thick, -] (2);
\draw (1) edge[ color=black!120, thick, -] (2b);
\draw (1) edge[ color=black!120, thick, -] (4);
\draw (6) edge[ color=black!120, thick, -] (2b);
\node (4d) at  (8.1, 3) {$\bullet$};
\node (aad) at  (8.1, 2.7) {$v_0$};
\node (1d) at (9.5,3) {$\bullet$};
\node (1ad) at (9.5,2.7) {$ v_1$};
\node (2bd) at (10.9,2.3) {$\bullet$};
\node (3bd) at (10.9,2) {$v_2$};
\node (6d) at  (12.3, 1.6) {$\bullet$};
\node (6bad) at  (12.67, 1.6) {$\; v_4$};
\node (5d) at  (12.3, 3.95) {$\bullet$};
\node (bad) at  (12.67, 3.95) {$\; v_6$};
\node (cd) at  (12.3, 2.7) {$\bullet$};
\node (c1d) at  (12.67, 2.7) {$\;v_5$};
\draw (1d) edge[ color=black!120, thick, -] (2bd);
\draw (1d) edge[ color=black!120, thick, -] (4d);
\draw (6d) edge[ color=black!120, thick, -] (2bd);
\draw (5d) edge[ color=black!120, thick, -] (cd);
\draw (6d) edge[ color=black!120, thick, -] (cd);
\end{tikzpicture}
  \caption{The subgraphs  $H_1$ (left) and $H_2$ (right) in Example \ref{e3.23}. }
\label{H1}
\end{figure}
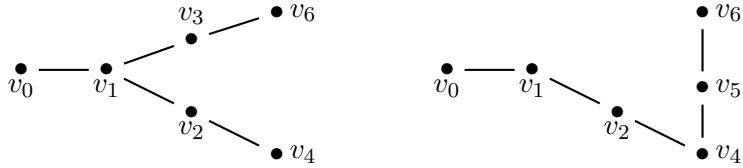
\end{example}

\section{Configuration space of  graph burnings}\label{S4}
\setcounter{equation}{0}

In this section,  we consider a set $\mathcal B(G)$ of all burnings of any graph $G$. We define  a configuration space   $\Delta_G$  which is a simplicial complex associated with the set  $\mathcal B(G)$ and prove that a one-dimensional skeleton of $\Delta_G$ coincides with the complement graph $\overline G$.  Afterwards,  we    describe  basic properties of $\Delta_G$ and describe relations between configuration spaces and standard  simplicial constructions.  
In this Section we will use standard definitions of simplicial and graph constructions \cite{Hatcher}, \cite{MiHomotopy},    \cite{Munkres}, \cite{Prasolov}. At first,   we recall  several basic  definitions  \cite{semi}, \cite{May}.

\begin{definition} \label{d4.1} \rm
A \emph{simplicial complex} $\Delta =(V,F)$ consists of a set $V$ of \emph{vertices} and a set $F$
of finite non-empty subsets of $V$ which are called \emph{simplexes}.  A simplex  $\sigma \in F$ containing $q+1 \, (q\geq 0) $ elements    is called a
$q$-\emph{simplex}  and the number $q$ is called the \emph{dimension} of $\sigma$ and we write  ($q=\operatorname{dim} \sigma$).  The set $F$ of simplexes must satisfy the following two properties:

(i) for every vertex $v\in V$  the set $\{v\}\in F$ is a simplex, 

(ii) if $\sigma \in F$ is a simplex then any   non-empty subset $\tau\subset \sigma$ is a simplex which is called  a \emph{face} of $\sigma$.

A simplex $\sigma\in \Delta$ is \emph{maximal} if it is not a face of any another simplex. 

A simplicial complex is $n$-\emph{dimensional} if it contains at least one $n$-dimen\-sional simplex but
no $(n+1)$-dimensional simplexes. 
\end{definition}

\begin{definition}\label{d4.2}\rm (i) A simplicial complex 
$\Delta_1=(V_1, F_1)$ is a \emph{subcomplex} of a simplicial complex  $\Delta=(V,F)$  if $V_1\subset V$ and $F_1\subset F$.

(ii) For $n\geq 0$, the \emph{$n$-skeleton}  $sk_n(\Delta)$ of a simplicial complex 
$\Delta=(V,F)$ is the subcomplex with the set of vertices $V$ and with the set of simplexes  $\{\sigma \in F| \operatorname{dim}\sigma \leq n\}$.
\end{definition} 

We note that every graph $G=(V,E)$ gives a   one-dimensional simplicial complex with the set $V$ of zero-dimensional simplexes and the set $E$ of one-dimensional simplexes.  Moreover, every  one-dimensional skeleton of a simplicial complex defines a graph with the same set of  vertices. 

\begin{definition} \label{d4.3} \rm  Let $\Delta=(V,F)$ and  $\Delta_1=(V_1,F_1)$
be simplicial complexes. A \emph{simplicial map}  $f\colon \Delta\to \Delta_1$ is given by a function $f\colon V\to V_1$ such that $f(\sigma)\in F_1$ for every simplex
$\sigma\in F$. 
\end{definition}

A simplicial complex is called \emph{finite}, if it has  finite number of vertices.
In this paper we  consider only finite  simplicial complexes.

\bigskip

Now we come back to the burning theory. Let $G=(V,E)$ be a graph. Denote by   $\mathcal  B(G)$ the set of all burnings of $G$. Since we consider only finite graphs, the set  $\mathcal  B(G)$
consists  of  finite number of pairs $(\lambda_G, S_G)$  defined in Section \ref{S3}. Every source sequence $S_G$ defines a set  $\widehat S_G= \{v_1, \dots, v_k\}\subset ~V$.    It follows from Definition \ref{d4.1} that a simplicial complex 
$\Delta=(V,F)$  is  defined  by the set $V$ of vertices and  a subset $F^m\subset F$ of maximal simplexes of $\Delta$.  We  will say that the set of maximal simplexes $F^m$ 
\emph{generates} the set $F$ of simplexes. 

\begin{definition}\label{d4.4}\rm  For a given graph $G=(V,E)$, we  define a \emph{burning configuration space of $G$} as a  simplicial complex $\Delta_G=(V_{\Delta_G}, F_{\Delta_G})$ where 
$V_{\Delta_G}=V$ and $F_{\Delta_G}$ is generated by the set of all maximal simplexes 
$$
F_{\Delta_G}^m=\{\widehat S_G\, |\, (\lambda_G, S_G)\in \mathcal B(G)\}.
$$  
\end{definition}

\begin{definition}\label{d4.5}\rm Let $G$ be a graph and    $H\subset G$ be a   connected subgraph with the inclusion $i\colon H\to G$. Consider a burning $\mathbf B(H)=(\lambda_H, S_H)$ of the graph $H$. We say that the burning $\mathbf B(H)$ 
\emph{admits an extension } to the  graph $G$  if there is a burning $\mathbf B(G)=(\lambda_G, S_G)$ such that 
$i\colon  (\lambda_H, S_H)\to (\lambda_G, S_G)$ is the morphism 
of burnings. 
\end{definition} 

\begin{example}\label{e4.6} \rm  Let us consider the  graph $G$ and its subgraph $H$ in Figure \ref{Ext}.  Consider a burning   $\mathbf B(H)$  with $S_H=(v_1, v_3)$. Then $\mathbf B(H)$ does not admit an extension to $G$.

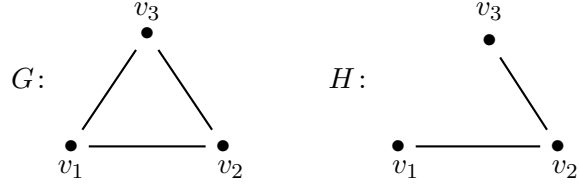
\begin{figure}[H]
\centering
\begin{tikzpicture}

\node (0) at  (5, 3) {$\bullet$};
\node (0a) at  (5.1, 2.7) {$v_2$};
\node (1) at (3,3) {$\bullet$};
\node (1a) at (3,2.7) {$ v_1$};
\node (2) at (4,4.5) {$\bullet$};
\node (3) at (4,4.79) {$v_3$};
\draw (0) edge[ color=black!120, thick, -] (2);
\draw (0) edge[ color=black!120, thick, -] (1);
\draw (1) edge[ color=black!120, thick, -] (2);

\node (11) at (6.7,3.9) {$H\colon $};
\node (111) at (2.5,3.9) {$G\colon $};

\node (6) at  (7.3, 3) {$\bullet$};
\node (6b) at  (7.4, 2.7) {$ v_1$};
\node (7) at  (9.4, 3) {$\bullet$};
\node (7b) at  (9.5, 2.7) {$v_2$};

\node (8) at  (8.5, 4.4) {$\bullet$};
\node (8b) at  (8.5,  4.79) {$v_3$};
\draw (6) edge[ color=black!120, thick, -] (7);
\draw (7) edge[ color=black!120, thick, -] (8);
\end{tikzpicture}
  \caption{The graphs $G$ (left) and  $H$ (right) in Example \ref{e4.6}.}
\label{Ext}
\end{figure}
   \end{example}

\begin{definition}\label{d4.7}\rm A graph $G$ is a \emph{burning extension of a subgraph $H$} if for every burning sequence $S_H$ there is a burning sequence $S_G$ such that $\widehat S_H\subset~\widehat S_G$. 
\end{definition}

\begin{lemma}\label{l4.8}  $\mathrm{(i)}$ Let $G=(V, E)$ be a graph, $v\in V$, and $K_1=\{v\}$ be a one-vertex subgraph with  the  natural inclusion $ i$. 
Then the unique burning $(\lambda_{K_1}, S_{K_1})$ admits an extension to $G$.
Moreover, the graph $G$ is a burning extension of the subgraph $K_1$. 

$\mathrm{(ii)}$ Let $G=(V, E)$ be a graph, $v,w\in V,$ and $\overline K_2=\{v, w\}$ be an edgeless subgraph with  the  natural inclusion $ i$. Let  
$\mathbf B=(\lambda_{\overline K_2}, S_{\overline K_2})$ be a 
 burning of $\overline K_2$. Then  $\mathbf B$ admits an extension to $G$ if an only if $v$ and $w$ are not adjacent, i.e. ${\overline K_2}$ is an induced subgraph. Moreover,    if $v$ and $w$ are not adjacent then    $G$ is a burning extension of $\overline K_2$. 
\end{lemma}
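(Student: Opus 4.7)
The plan is to treat the two parts separately, with greedy extension of source sequences as the key technique.

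For part (i), the one-vertex graph $K_1 = \{v\}$ has a unique burning with $S_{K_1} = (v)$ and $\lambda_{K_1}(v) = 1$. I would construct an extension by taking $v_1 = v$ as the first source in a burning of $G$, and then extending the source sequence greedily: while the current $U_{j+1}$ does not yet cover all vertices of $G$, pick any vertex outside $U_{j+1}$ to be the next source $v_{j+1}$. Since $G$ is finite, this process terminates in a valid burning $\mathbf{B}(G) = (\lambda_G, S_G)$. The morphism condition of Definition~\ref{d3.15} is immediate: the inclusion sends $v$ to $v$, Lemma~\ref{l3.2} gives $\lambda_G(v) = 1 = \lambda_{K_1}(v)$, and $\tau\colon P_1 \to P_{T^G}$ is the inclusion at $1$ (consistent with Proposition~\ref{p3.16}). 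The burning extension property of Definition~\ref{d4.7} follows since the unique $\widehat{S_{K_1}} = \{v\}$ is trivially contained in any such $\widehat{S_G}$.

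For part (ii), I handle the two directions of the equivalence. For the forward direction, argued by contrapositive, assume $v, w$ are adjacent. Consider a burning $\mathbf{B}(\overline{K_2})$ with source sequence $(v, w)$; the case $(w, v)$ is symmetric. Any extension would force, by Definition~\ref{d3.15}, the source sequence of $G$ to begin with $v_1 = v, v_2 = w$. But Definition~\ref{d3.1} demands $v_2 \notin U_2 = N_1(v_1)$, which is violated because $w$ is a neighbor of $v$. Hence no extension exists. For the converse, assume $v, w$ are not adjacent; both burning sequences of $\overline{K_2}$, namely $(v, w)$ and $(w, v)$, can legally start a burning of $G$, since non-adjacency gives $v_2 \notin N_1(v_1)$. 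Completing by the greedy extension as above yields a burning $\mathbf{B}(G)$. By Lemma~\ref{l3.2}, the burning times of $v$ and $w$ under $\lambda_G$ are $1$ and $2$, matching $\lambda_{\overline{K_2}}$; Proposition~\ref{p3.16} gives that $\tau\colon P_2 \to P_{T^G}$ is the inclusion, completing the morphism verification. The burning extension claim in the ``moreover'' is immediate since $\widehat{S_{\overline{K_2}}} = \{v, w\} \subset \widehat{S_G}$.

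The main (essentially only) obstacle is justifying that the greedy extension procedure always produces a complete, valid burning sequence of $G$. This reduces to the simple observation that whenever the vertex set of $U_{j+1}$ is a proper subset of $V_G$, any vertex outside $U_{j+1}$ is a legal choice for $v_{j+1}$. A small subtlety to note is that $\overline{K_2}$ admits no burning with a single source (its two vertices are disconnected in $\overline{K_2}$), so both possible source sequences of $\overline{K_2}$ have length two, which justifies the case split in part (ii).
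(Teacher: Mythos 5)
Your proof is correct and is essentially the approach the paper intends: the paper's own proof of Lemma \ref{l4.8} is the single line ``Follows from Definition \ref{d3.1}'', and your greedy extension of the source sequence (possible because each $U_{j+1}$ strictly contains $U_j$, so the process terminates on a finite graph) together with the adjacency obstruction $w\in N_1(v)=U_2$ is exactly the content being taken for granted there. Your added observations --- that $\overline K_2$ has no one-source burning and that the burning times of the first two sources are forced to be $1$ and $2$ so the diagram \eqref{3.12} commutes with $\tau$ the inclusion --- are correct and make the verification complete.
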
 
\begin{proof} Follows from Definition \ref{d3.1}. 
\end{proof}

\begin{definition}\label{d4.9} \rm  (i) Let $G=(V_G, E_G)$ and $H=(V_H, E_H)$ be two graphs. The \emph{disjoint union} $G+H=(V,E)$  is a graph which  has as its vertices the set  $V=V_G\sqcup V_H$ and as its edges the set $E=E_G\sqcup E_H$.  

(ii) The \emph{iterated graph sum} $nG\, (n\geq 1)$ of a graph $G$  is the disjoint union of $n$ disjoint copies of $G$.
\end{definition} 

\begin{example}\label{e4.10} \rm 
Every  disjoint uinion $G+H$ is the burning extension of each of the graphs $G$ and $H$. 
\end{example}

\begin{proposition}\label{p4.11}  Let $G=(V, E)$ be a graph and $\Delta_G$ be its configuration space. Then the following holds:

 $\mathrm{(i)}$
 $0$-skeleton  $sk_0(\Delta_G)$  coincides with $V$, 
 
 $\mathrm{(ii)}$
 $1$-skeleton  $sk_1(\Delta_G)$  coincides with the complement graph $\overline G$.
\end{proposition}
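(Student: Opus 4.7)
\bigskip
\noindent\textbf{Proof plan.}

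Part (i) is essentially a bookkeeping observation: by Definition~\ref{d4.4} we have $V_{\Delta_G}=V$, and by Definition~\ref{d4.1}(i) every singleton $\{v\}$ with $v\in V$ is automatically a $0$-simplex of $\Delta_G$. The only mild thing to note is that this is consistent with the generators $F_{\Delta_G}^m$, because for any $v\in V$ the one-source sequence $S_G=(v)$ (or any burning beginning with $v$) yields a burning with $v\in\widehat{S}_G$, so $\{v\}$ is a face of some maximal simplex.

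For part (ii), I plan to characterize the $1$-simplices of $\Delta_G$ as exactly the non-edges of $G$. By definition, $\{v,w\}$ is a $1$-simplex if and only if $\{v,w\}\subseteq\widehat{S}_G$ for some burning $(\lambda_G,S_G)\in\mathcal B(G)$, so I need to show:
\begin{equation*}
\bigl(\exists\,S_G\text{ with }\{v,w\}\subseteq\widehat{S}_G\bigr)\iff \{v,w\}\notin E.
\end{equation*}

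\textbf{($\Rightarrow$) Sources of a single burning are pairwise non-adjacent.} Let $S_G=(v_1,\dots,v_k)$ and suppose $v=v_i$, $w=v_j$ with $i<j$. By Definition~\ref{d3.1}, $v_j\notin U_j$, and by \eqref{3.2} $U_j\supseteq N_{j-i}(v_i)$. Hence $d_G(v_i,v_j)>j-i\ge 1$, so $v_i$ and $v_j$ are not adjacent in $G$. Thus $\{v,w\}\notin E$.

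\textbf{($\Leftarrow$) Non-adjacent pairs extend to burnings of $G$.} If $\{v,w\}\notin E$, then the two-vertex induced subgraph $\overline K_2$ on $\{v,w\}$ is edgeless. By Lemma~\ref{l4.8}(ii), the burning of $\overline K_2$ with source sequence $(v,w)$ admits an extension to $G$, i.e.\ there exists a burning $(\lambda_G,S_G)$ with $v,w\in\widehat{S}_G$. Hence $\{v,w\}$ is a face of the maximal simplex $\widehat{S}_G$, so $\{v,w\}\in F_{\Delta_G}$.

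Combining the two directions, the edge set of $sk_1(\Delta_G)$ (viewed as a graph on vertex set $V$, as noted after Definition~\ref{d4.2}) is precisely the set of non-edges of $G$, which by Definition~\ref{d2.5}(vi) is the edge set of $\overline G$. Together with part (i) this gives $sk_1(\Delta_G)=\overline G$.

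The only substantive step is the forward direction, and its crux is the observation $v_j\notin N_{j-i}(v_i)$; everything else is either Lemma~\ref{l4.8}(ii) or definitional. I do not foresee any obstacle beyond carefully reading off the distance inequality from Definition~\ref{d3.1}.
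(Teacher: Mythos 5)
Your proof is correct and follows essentially the same route as the paper: part (i) is definitional (the paper cites Lemma~\ref{l4.8}(i)), and part (ii) combines the observation that two sources of one burning sequence are never adjacent (your distance computation $d_G(v_i,v_j)>j-i\ge 1$ is exactly the content of the paper's appeal to Definition~\ref{d3.1}) with Lemma~\ref{l4.8}(ii) for the converse. You have merely written out in full the two directions that the paper compresses into one sentence.
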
 
\begin{proof} $\mathrm{(i)}$ Follows immediately from Lemma \ref{l4.8}.

 $\mathrm{(ii)}$  Follows from Lemma \ref{l4.8}  since two adjacent vertices can not belong to the same burning sequence due to Definition \ref{d3.1}.
\end{proof}

\begin{corollary}\label{c4.12}  Let  $G$ be a graph and  $H=sk_1(\Delta_G)$  be a graph that is given by  the $1$-skeleton of its configuration space.  Then $sk_1(\Delta_H)=G$.  
\end{corollary}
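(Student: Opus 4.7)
The plan is to apply Proposition~\ref{p4.11}(ii) twice and then invoke the involutive nature of graph complementation. First, by Proposition~\ref{p4.11}(ii), the $1$-skeleton $H = sk_1(\Delta_G)$ is precisely the complement graph $\overline{G}$. In particular, by Proposition~\ref{p4.11}(i), both $\Delta_G$ and $\Delta_H$ have the same vertex set $V = V_G = V_H$, so the successive complements take place over the same underlying vertex set.

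Next, I would apply Proposition~\ref{p4.11}(ii) to the graph $H$ itself. This yields $sk_1(\Delta_H) = \overline{H} = \overline{\overline{G}}$. The final step is to observe, directly from Definition~\ref{d2.5}(vi), that on a fixed vertex set two distinct vertices are adjacent in $\overline{\overline{G}}$ if and only if they are not adjacent in $\overline{G}$, if and only if they are adjacent in $G$. Hence $\overline{\overline{G}} = G$, and the corollary follows.

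There is essentially no obstacle here: the proof is a one-line formal consequence of Proposition~\ref{p4.11}(ii), together with the elementary fact that complementation is an involution. The only small point worth mentioning explicitly is that Proposition~\ref{p4.11}(i) guarantees $V_{\Delta_H} = V_H = V$, so it is legitimate to treat $\Delta_H$ as a simplicial complex on the same vertex set as $\Delta_G$ and to identify $\overline{\overline{G}}$ with $G$ rather than with some isomorphic copy.
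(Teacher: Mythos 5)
Your proof is correct and is essentially the paper's own argument: the paper's one-line proof (``the complement graph of $\overline G$ is $G$'') implicitly applies Proposition~\ref{p4.11}(ii) twice exactly as you do, and you merely make the double-complementation step explicit. Nothing further is needed.
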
 
\begin{proof} The complement graph of $\overline G$ is $G$.
\end{proof} 

\bigskip

Recall that a \emph{clique}   in a graph $G$ is an induced subgraph $K$, which is complete. 
It follows from Proposition \ref{p4.13} below  that  in some sense the simplicial structure of $\Delta_G$  is opposite to the clique structure of  $G$.

\begin{proposition}\label{p4.13} Let $G$ be a graph and $K\subset G$ be a clique. Then any two distinct vertices $v, w\in V_K$ do not belong to a simplex $\sigma$ of $\Delta_G$. 
\end{proposition}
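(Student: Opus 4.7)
The plan is to reduce this to the previously established identification of the $1$-skeleton of $\Delta_G$ with the complement graph $\overline G$ (Proposition~\ref{p4.11}(ii)), using only the axiomatic structure of a simplicial complex.

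First I would argue by contradiction. Suppose that two distinct vertices $v,w\in V_K$ are contained in a common simplex $\sigma\in F_{\Delta_G}$. By Definition~\ref{d4.1}(ii), every non-empty subset of a simplex is itself a simplex, so the pair $\{v,w\}\subset\sigma$ is a $1$-simplex of $\Delta_G$. Hence $\{v,w\}$ lies in $sk_1(\Delta_G)$.

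Next I would apply Proposition~\ref{p4.11}(ii), which identifies $sk_1(\Delta_G)$ with the complement graph $\overline G$. Thus $\{v,w\}$ is an edge of $\overline G$, and by Definition~\ref{d2.5}(vi) this means that $v$ and $w$ are not adjacent in $G$. On the other hand, since $K$ is a clique and $v,w$ are two distinct vertices of $K$, they are adjacent in $G$. This contradiction completes the proof.

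There is essentially no obstacle here: the statement is a formal consequence of the face-closure axiom for simplicial complexes together with the identification $sk_1(\Delta_G)=\overline G$ already established in Proposition~\ref{p4.11}(ii). The only point worth stressing is the appeal to the face-closure axiom, which promotes the hypothetical simplex $\sigma$ of arbitrary dimension containing $v$ and $w$ down to the $1$-simplex $\{v,w\}$ where the contradiction is visible; after that, the conclusion is immediate.
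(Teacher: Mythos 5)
Your proof is correct, but it takes a different route from the paper's. The paper's own proof is a one-line appeal to Definition~\ref{d3.1}: two adjacent vertices cannot both occur in a single burning sequence (once one of them is a source at time $i$, the other lies in $N_1$ of that source and hence in every subsequent $U_j$, so it is disqualified from being a later source), and since by Definition~\ref{d4.4} every simplex of $\Delta_G$ is a subset of some $\widehat S_G$, no simplex can contain two adjacent vertices. You instead invoke the face-closure axiom to extract the $1$-simplex $\{v,w\}$ from the hypothetical simplex $\sigma$ and then cite the identification $sk_1(\Delta_G)=\overline G$ from Proposition~\ref{p4.11}(ii). This is logically sound and legitimately ordered, since Proposition~\ref{p4.11} precedes this statement in the paper; it has the virtue of being a purely formal deduction once the $1$-skeleton is known. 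The trade-off is that it conceals where the graph-theoretic content actually sits: the proof of Proposition~\ref{p4.11}(ii) itself rests on exactly the observation about Definition~\ref{d3.1} that the paper uses here directly, so your argument is in effect the same fact routed through one more layer of packaging. Your explicit use of the face-closure axiom to pass from a simplex of arbitrary dimension down to the pair $\{v,w\}$ is a step the paper leaves implicit, and it is worth having spelled out.
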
 
\begin{proof} It follows from Definition \ref{d3.1} that two adjacent vertices can not belong to the same burning sequence. 
\end{proof}

\begin{proposition}\label{p4.15} Let a graph $G$ be  a burning extension of a graph $H$. Then the natural inclusion  $i\colon H\to G$ induces a simplicial map  $i_*\colon \Delta_H\longrightarrow \Delta_G$.
\end{proposition}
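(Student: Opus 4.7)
The plan is to use the definition of burning extension directly and check that the vertex-level inclusion $V_H \hookrightarrow V_G$ satisfies the simplicial map condition from Definition \ref{d4.3}. Since $V_{\Delta_H}=V_H$ and $V_{\Delta_G}=V_G$, the inclusion $i$ already induces a function on the vertex sets of the two simplicial complexes, so the only thing to verify is that simplexes go to simplexes.

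First I would recall how simplexes of $\Delta_H$ arise: by Definition \ref{d4.4}, $F_{\Delta_H}$ is generated by the maximal simplexes $\widehat S_H$ for burnings $(\lambda_H, S_H)\in \mathcal B(H)$, so any simplex $\tau\in F_{\Delta_H}$ is a non-empty subset $\tau\subset \widehat S_H$ for some burning sequence $S_H$ of $H$.

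Next, I would invoke Definition \ref{d4.7}: since $G$ is a burning extension of $H$, there exists a burning sequence $S_G$ of $G$ with $\widehat S_H\subset \widehat S_G$. Then $i(\tau)=\tau$ (as a subset of $V_G$) satisfies $\tau \subset \widehat S_H\subset \widehat S_G$, so $\tau$ is a non-empty subset of the maximal simplex $\widehat S_G\in F_{\Delta_G}^m$, hence a face and therefore a simplex of $\Delta_G$ by Definition \ref{d4.1}(ii).

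There is no real obstacle here; the statement is essentially a reformulation of Definition \ref{d4.7} in simplicial language. The only thing worth emphasizing in the write-up is that although $i$ is being described as a map of graphs, what we need is simply the underlying map of vertex sets together with the fact that any set of vertices forming a face in $\Delta_H$ remains a face in $\Delta_G$, which is exactly what the burning extension property guarantees.
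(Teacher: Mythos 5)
Your argument is correct: reducing to the maximal simplexes $\widehat S_H$ via Definition \ref{d4.4}, invoking the burning extension property of Definition \ref{d4.7} to embed each $\widehat S_H$ into some $\widehat S_G$, and concluding by the face axiom of Definition \ref{d4.1} is exactly the intended reasoning. The paper in fact states Proposition \ref{p4.15} without any proof, treating it as immediate from these definitions, so your write-up supplies the omitted details and takes the same (essentially unique) route.
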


We note that the trees play a significant role in the burning theory, see e.g. \cite{Bonato_0}[Th.~2.4, Cor. 2.5] . 
We have good functorial relations between inclusions of trees  and burning extensions  defined in Definition \ref{d4.7} 

\begin{theorem}\label{t4.16}  Let $H$ and $G$ be trees and $H\subset G$. 
Then $G$ is the burning extension of the graph $H$. 
\end{theorem}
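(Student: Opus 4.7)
The plan is, given any burning sequence $S_H=(v_1,\ldots,v_k)$ of $H$, to use it as the initial segment of a burning sequence $S_G$ of $G$, then fill out the remaining time-steps greedily until all of $G$ is burned.

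First, I would observe that the tree structure forces $H$ to be isometrically embedded in $G$. Since $G$ is a tree, paths between any two vertices of $G$ are unique; hence for $u,w\in V_H$ the $H$-path from $u$ to $w$ (which exists because $H$ is connected) coincides with the unique $G$-path. Therefore $d_H(u,w)=d_G(u,w)$, $H$ is an induced subgraph of $G$, and $N_n^H(v)=N_n^G(v)\cap V_H$ for every $v\in V_H$ and $n\ge 0$.

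Next, denoting by $U_j^G$ and $U_j^H$ the subgraphs defined by \eqref{3.2} in $G$ and in $H$ respectively, I would verify that $v_j\notin U_j^G$ for $2\le j\le k$. If instead $v_j\in N_{j-i}^G(v_i)$ for some $1\le i<j$, then $d_G(v_i,v_j)\le j-i$, and by the isometry established above $d_H(v_i,v_j)\le j-i$, so $v_j\in N_{j-i}^H(v_i)\subset U_j^H$, contradicting the fact that $S_H$ is a burning sequence of $H$.

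Finally, I would extend $(v_1,\ldots,v_k)$ greedily: for $j>k$, while $V_{U_j^G}\ne V_G$ pick any $v_j\in V_G\setminus V_{U_j^G}$, and halt at $m=j-1$ as soon as $V_{U_j^G}=V_G$. Each step is legal since $v_j\notin U_j^G$ by construction, and $|V_{U_j^G}|$ strictly grows (the new source $v_j$ lies in $N_1^G(v_j)\subset U_{j+1}^G$ but not in $U_j^G$), so the procedure stops after at most $|V_G|$ steps, producing a burning sequence $S_G=(v_1,\ldots,v_m)$ of $G$ with $\widehat S_H\subset\widehat S_G$. The main obstacle is the first step: the equality $d_H=d_G$ on $V_H$ is what transports the burning conditions from $H$ to $G$, and it is precisely the tree hypothesis that delivers it; for non-tree subgraphs or ambient graphs one could have $d_H>d_G$, and the initial segment would fail to satisfy $v_j\notin U_j^G$.
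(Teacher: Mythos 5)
Your proposal is correct and follows essentially the same route as the paper: both arguments use the uniqueness of paths in a tree to show that distances (and hence closed neighborhoods) computed in $H$ agree with those computed in $G$ on $V_H$, so the conditions $v_j\notin U_j$ transfer from $H$ to $G$, and both then complete the sequence by greedily adding new sources until all of $G$ is covered. Your explicit isolation of the isometric-embedding fact $d_H=d_G$ on $V_H$ is just a cleaner packaging of the step the paper performs inside its contradiction argument.
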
 
\begin{proof} Consider a burning $\mathbf B(H)=(\lambda_H, S_H)$ where 
$S_H=(v_1, \dots, v_n), \  v_i\in V_H$ and $n\geq 2$. For $n=1$ the statement  follows from Definition \ref{d3.1}.   For  $2\leq j\leq n+1$ we denote by  
$U_j$  the  induced subgraph of $H$ defined  in (\ref{3.2}) where all  neighborhoods $N_k$ are  neighborhoods in $H$.   For  $2\leq j\leq n+1$, we define also an  induced subgraph of the graph $G$ by setting 
\begin{equation}\label{4.1}
U_j^G=N_{j-1}^G (v_1)\Cup N_{j-2}^G(v_2)\Cup \dots \Cup N_1^G(v_{j-1})
\end{equation}
where all  neighborhoods $N_k^G$ are the neighborhoods in $G$. 
We have evidently  $U_j\subset U_j^G$. By Definition \ref{d3.1}, we have  $v_j\notin U_j$ and $U_{n+1}=G$.  Now we prove that $v_j\notin U_j^G$. Indeed, 
let us suppose that $v_j\in U_j^G$. Then there exists at least one neighborhood 
$N_{j-k}^G(v_k)$ in (\ref{4.1})  such that $v_j\in N_{j-k}^G(v_k)$. Hence there  is a path of a minimal length $d\leq j-k$ from $v_k\in V_H$ to $v_j\in V_H$. Since $G$ is a tree  there is an unique path of minimal length $d$ from 
$v_k$ to $v_j$ which lays in  the subgraph $H$, since the tree is connected by the definition. Hence $v_j \in  N_{j-k}(v_k)$ and  
$v_j\in U_j$. We obtain a contradiction and, hence, $v_j\notin U_j^G$ for  $2\leq j\leq n+1$. Now if $U_{n+1}^G=G$ we have obtained that $(v_1, \dots, v_n)$ is the burning sequence of the graph $G$. Otherwise, we have  
$U_{n+1}^G\varsubsetneqq G$ and we can  add an  issue vertex $v_{n+1}\in V_G$ to the burning sequence $S_H$. Continuing this process we obtain a burning of $G$ which is a burning extension of $S_H$. 
\end{proof} 

\begin{corollary}\label{c4.17}   Every inclusion of trees $H\to  G$  induces   an inclusion of simplicial complexes $\Delta_H\to \Delta_G$. 
\end{corollary}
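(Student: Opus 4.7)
The plan is to chain the two preceding results, Theorem \ref{t4.16} and Proposition \ref{p4.15}, and then verify that the induced simplicial map is injective on simplexes, so that $\Delta_H$ sits inside $\Delta_G$ as a subcomplex.

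First, by Theorem \ref{t4.16}, since $H$ and $G$ are trees with $H \subset G$, the graph $G$ is a burning extension of $H$ in the sense of Definition \ref{d4.7}. Then Proposition \ref{p4.15} immediately yields a simplicial map $i_* \colon \Delta_H \to \Delta_G$ induced by the vertex inclusion $i \colon V_H \hookrightarrow V_G$.

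Next I would verify that $i_*$ is an inclusion of simplicial complexes, that is, that it is injective on vertices and that its image is a subcomplex of $\Delta_G$. Injectivity on vertices is automatic because $i$ itself is an inclusion; it remains to show that every simplex $\sigma$ of $\Delta_H$ is a simplex of $\Delta_G$. By Definition \ref{d4.4}, such a $\sigma$ is contained in $\widehat{S_H}$ for some burning sequence $S_H$ of $H$. Since $G$ is a burning extension of $H$, there exists a burning sequence $S_G$ of $G$ with $\widehat{S_H} \subset \widehat{S_G}$, so $\sigma \subset \widehat{S_G}$ is a face of a maximal simplex of $\Delta_G$, hence a simplex of $\Delta_G$.

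I do not expect a genuine obstacle here: the substantial work has already been carried out in Theorem \ref{t4.16}, where the tree hypothesis was used to guarantee that vertices outside the neighborhoods $U_j$ of $H$ remain outside the corresponding neighborhoods $U_j^G$ in the larger tree. The present statement is therefore essentially a bookkeeping corollary that packages the burning-extension property into the language of simplicial complexes via Proposition \ref{p4.15}.
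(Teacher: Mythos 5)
Your proposal is correct and matches the paper's intended argument: the corollary is stated immediately after Theorem \ref{t4.16} with no separate proof, precisely because it follows by combining Theorem \ref{t4.16} (trees give burning extensions) with Proposition \ref{p4.15} (burning extensions induce simplicial maps). Your extra verification that the map is genuinely an inclusion of subcomplexes --- every simplex of $\Delta_H$ lies in some $\widehat S_H$, hence in some $\widehat S_G$, hence is a face of a maximal simplex of $\Delta_G$ --- is exactly the bookkeeping needed to upgrade ``simplicial map'' to ``inclusion,'' and is a welcome addition rather than a deviation.
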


\begin{definition}\label{d4.18} \rm The \emph{cone over a simplicial complex} 
$\Delta=(V,F)$ is a simplicial complex 
$C(\Delta)=(V_{C(\Delta)}, F_{C(\Delta)})$ with the  vertex set 
 $V_{C(\Delta)}= V\sqcup\{v_*\}$  where $v_*$ is a new vertex, i.e. $v_*\not \in V$. 
 For $q>0$, a  $q$-simplex in $F_{C\Delta}$
is either a $q$-simplex of $\Delta$  itself, or it is  a disjont union $\sigma \sqcup \{v_*\}$ of the set  $ \{v_*\}$  with a $(q - 1)$-simplex  $\sigma$  of $\Delta$.
\end{definition}

\begin{definition}\label{d4.19} \rm The \emph{suspension over a simplicial complex} 
$\Delta=(V,F)$ is a simplicial complex 
$S(\Delta)=(V_{S(\Delta)}, F_{S(\Delta)})$ with the  vertex set 
 $V_{S(\Delta)}= V\sqcup\{v_*, w_*\}$  where $v_*, w_*$ are two  new distinct vertices, i.e. $v_*,w_*\not \in V$. For $q>0$, a 
$q$-simplex in $E_{S\Delta}$
is either a $q$-simplex of $\Delta$  itself, or it is  a disjoint union $\sigma \sqcup \{v_*\}$ of the set  $ \{v_*\}$  with a $(q - 1)$-simplex  $\sigma$  of $\Delta$, or
 it is  a disjoint union $\sigma \sqcup \{w_*\}$ of the set  $ \{w_*\}$  with a $(q - 1)$-simplex  $\sigma$  of $\Delta$.
\end{definition}

\begin{theorem}\label{t4.20} {\rm (i)} Let $G$ be a graph and $K_1$ be  the one-vertex graph. Then the  configuration space $\Delta_{G+K_1}$ coincides with the cone $C(\Delta_G)$ over the  configuration $\Delta_G$.

{\rm(ii)} Let $G$ be a graph  and $P_2$ be the path graph with two verices.  Then the configuration space $\Delta_{G+P_2}$ coincides with suspension 
$S(\Delta_G)$ over the configuration space  $\Delta_G$. 
\end{theorem}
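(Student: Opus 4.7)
The plan is to prove (i) and (ii) by a common two-step strategy: first characterize the burning source sets of $G+K_1$ and $G+P_2$ in terms of simplexes of $\Delta_G$, and then match the resulting simplex set with that of $C(\Delta_G)$ or $S(\Delta_G)$. For part (i), since $v_*$ is isolated in $G+K_1$, it can be burned only if it is a source, so $v_* \in \widehat{S}$ for every burning $S$ of $G+K_1$. One direction of the correspondence reads: for every burning $S_G = (w_1,\ldots,w_k)$ of $G$, the appended sequence $(w_1,\ldots,w_k, v_*)$ is a burning of $G+K_1$ -- this is a direct verification using that the closed neighborhoods of $V_G$-vertices agree in $G$ and in $G+K_1$, and that $N_j(v_*) = \{v_*\}$. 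Hence $\widehat{S_G}\cup\{v_*\}$ is a burning source set of $G+K_1$, so every set $T\cup\{v_*\}$ with $T$ a simplex of $\Delta_G$ is a simplex of $\Delta_{G+K_1}$.

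For the reverse direction, given any burning $S$ of $G+K_1$, set $T := \widehat{S}\setminus\{v_*\} \subseteq V_G$ and let $S_0$ be the sequence obtained by deleting $v_*$ from $S$. A termwise comparison of the closed-neighborhood unions $U_t^{G,S_0}$ and $U_{t+1}^{G+K_1,S}\cap V_G$ shows that $U_t^{G,S_0} \subseteq U_{t+1}^{G+K_1,S}\cap V_G$, which transfers the non-burn conditions from $S$ to $S_0$. The end-time condition $U_m^{G,S_0}=G$ may fail (for example, $G=P_4$ with $S=(v_2,v_*)$ has $T=\{v_2\}$, which is not a burning source set of $G$); in that case I would extend $S_0$ by adjoining fresh sources from the still-unburned vertex set one at a time until the $G$-burning terminates, producing a burning $S_G$ of $G$ with $\widehat{S_G}\supseteq T$. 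Combining both directions yields: a nonempty $\tau \subseteq V_G\cup\{v_*\}$ is a simplex of $\Delta_{G+K_1}$ if and only if either $v_*\notin\tau$ and $\tau$ is a simplex of $\Delta_G$, or $v_*\in\tau$ and $\tau\setminus\{v_*\}$ is a simplex of $\Delta_G$ or empty. By Definition~\ref{d4.18}, this is exactly the simplex set of $C(\Delta_G)$, with the cone apex identified with $v_*$.

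Part (ii) follows by the same strategy, the twist being that $v_*$ and $w_*$ are adjacent in $G+P_2$. Proposition~\ref{p4.13} prevents $v_*$ and $w_*$ from belonging to the same simplex of $\Delta_{G+P_2}$, and since $\{v_*,w_*\}$ is a component disjoint from $V_G$, at least one of them must be a source in any burning of $G+P_2$, so exactly one is. Running the analogue of the argument above for each of the two symmetric choices shows that the simplexes of $\Delta_{G+P_2}$ are precisely simplexes of $\Delta_G$, or of the form $\sigma\cup\{v_*\}$ or $\sigma\cup\{w_*\}$ with $\sigma$ a simplex of $\Delta_G$ or empty, matching Definition~\ref{d4.19} of $S(\Delta_G)$. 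The main technical obstacle in both parts is the completeness-restoration step: deleting the added source(s) from a burning sequence of the disjoint union does not always yield a sequence satisfying the end-time condition of Definition~\ref{d3.1}, so completeness must be restored by appending further sources, with careful bookkeeping of closed-neighborhood radii under the index shifts; finiteness of the graph guarantees termination.
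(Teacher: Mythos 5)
Your proof is correct and follows the same basic route as the paper: establish a two-way correspondence between burnings of the disjoint union and burnings of $G$, then read off the simplex sets. The paper's own proof is only a two-line appeal to the definitions, so your contribution is essentially to supply the verification it omits. One point is worth highlighting: your observation that deleting the added source from a burning of $G+K_1$ (or $G+P_2$) need not yield a burning of $G$ --- because the end-time condition $U_{m}=G$ can fail, as in your $P_4$ example with $S=(v_2,v_*)$ --- is a genuine subtlety. The paper in fact asserts in its proof of (ii) that every burning sequence of $G+P_2$ is obtained by inserting $v_*$ or $w_*$ into a burning sequence of $G$, which is not literally true; your completeness-restoration step (appending further sources, which leaves the earlier non-burn conditions $v_j\notin U_j$ intact and terminates by finiteness) is exactly what is needed to conclude that $\widehat S\setminus\{v_*\}$ is nevertheless contained in \emph{some} burning source set of $G$, which is all the generated simplicial complex sees. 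So your write-up is not only correct but repairs a small inaccuracy in the published argument.
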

\begin{proof} (i)  The graph   $G+K_1$  is the burning extension  of the graph $G$.  Now the result follows from Definitions \ref{d3.1}, \ref{d4.4}, and \ref{d4.18}. 

(ii)  Let $V_{P_2}=\{v_*,w_*\}$. The graph $G+P_2$ is the burning extension  of the graph $G$ and every burning sequence $S_{G+P_2}$  has the form 
$
(v_1, \dots, v_i, v_*, v_{i+1}, v_n)$ or 
$
(v_1, \dots, v_i, w_*, v_{i+1}, v_n)
$ 
where $(v_1, \dots, v_n)$ is a burning sequence  of $G$.  Now the result follows from Definitions \ref{d3.1}, \ref{d4.4}, and \ref{d4.19}.
\end{proof}

Below we will use the   \emph{geometric realization $|\Delta|$} of a simplicial complex $\Delta$ which is a \emph{polyhedron}  (see, for example, \cite[\S 1.5]{Matousek}).

\begin{theorem}\label{t4.21}  Let $\Pi_n=nP_2\, (n\geq 1)$ be an iterated sum of the path graph $P_2$. Then the geometric realization  $|\Delta_{\Pi_n}|$ of the burning configuration space  $\Delta_{\Pi_n}$ is homeomorphic to  $(n-1)$-dimensional sphere $S^{n-1}$.  
\end{theorem}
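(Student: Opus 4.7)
The plan is to prove the statement by induction on $n$, using the suspension formula from Theorem~\ref{t4.20}(ii) as the inductive step and a direct computation of $\Delta_{P_2}$ as the base case.

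For the base case $n=1$, I would analyze $\Delta_{P_2}$ by hand. The graph $P_2$ has two vertices $v,w$ joined by an edge; by Definition~\ref{d3.1}, any two adjacent vertices cannot appear together in a burning sequence, so the only admissible burning sequences are the singletons $(v)$ and $(w)$, each of which burns $P_2$ in end time~$2$. Hence the maximal simplices of $\Delta_{P_2}$ are the two vertex-singletons $\{v\}$ and $\{w\}$, so $|\Delta_{P_2}|$ is a pair of disjoint points, which is homeomorphic to $S^0$.

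For the inductive step, assume $|\Delta_{\Pi_{n-1}}|\cong S^{n-2}$ for some $n\ge 2$. Since $\Pi_n = \Pi_{n-1} + P_2$ by Definition~\ref{d4.9}, Theorem~\ref{t4.20}(ii) gives an equality of simplicial complexes
\begin{equation*}
\Delta_{\Pi_n} \;=\; S(\Delta_{\Pi_{n-1}}).
\end{equation*}
Passing to geometric realizations, the simplicial suspension of Definition~\ref{d4.19} realizes topologically as the (unreduced) suspension: adjoining two new vertices $v_*,w_*$ and joining each to every simplex of $\Delta_{\Pi_{n-1}}$ produces two cones glued along $|\Delta_{\Pi_{n-1}}|$, i.e.\ the topological suspension. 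Since the topological suspension of $S^{k}$ is $S^{k+1}$, we obtain
\begin{equation*}
|\Delta_{\Pi_n}| \;=\; |S(\Delta_{\Pi_{n-1}})| \;\cong\; S\bigl(|\Delta_{\Pi_{n-1}}|\bigr)\;\cong\; S(S^{n-2})\;\cong\; S^{n-1},
\end{equation*}
completing the induction.

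The only step that requires some care is the identification of the simplicial suspension with the topological suspension on geometric realizations; once that is noted, the argument is a clean two-line induction anchored at the explicit computation $|\Delta_{P_2}|\cong S^0$. A minor verification that should be mentioned is that Theorem~\ref{t4.20}(ii) indeed applies with $G=\Pi_{n-1}$ (which requires only that $\Pi_{n-1}$ is a graph, so no additional hypothesis is needed), and that the two new vertices added in the suspension correspond exactly to the two vertices of the extra $P_2$ summand, consistent with the description of burning sequences of a disjoint sum given in the proof of Theorem~\ref{t4.20}(ii).
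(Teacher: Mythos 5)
Your proof is correct and follows essentially the same route as the paper: the paper's proof likewise observes that $\Delta_{P_2}$ consists of two points and then performs induction on $n$ using Theorem~\ref{t4.20}(ii). You merely spell out the details (the explicit burning sequences of $P_2$, and the identification of the simplicial suspension with the topological one on geometric realizations) that the paper leaves implicit.
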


\begin{proof}  The configuration space of the graph $P_2$ consists of two distinct vertices.  Now induction in $n$ using Theorem \ref{t4.20}(ii) finishes the proof. 
\end{proof}

\begin{corollary}\label{c4.22} {\rm (i)} The configuration space of the graph $P_2+P_2$ is isomorphic to  the cyclic  graph $C_4$ with the set of vertices $\{0,1,2,3\}$ and the set of edges $\{\{0,1\}, \{1,2\}, \{2,3\}, \{3,0\}\}$.

{\rm (ii)} The geometric realization  of the configuration space of the graph $P_2+P_2+P_2$ is presented by the boundary of the  octahedron. 
\end{corollary}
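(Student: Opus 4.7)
The plan is to obtain both parts iteratively from Theorem \ref{t4.20}(ii) applied to the base case $G = P_2$. So first I would identify $\Delta_{P_2}$ explicitly: the graph $P_2$ has two adjacent vertices, hence by Lemma \ref{l4.8}(ii) they cannot both appear in any burning sequence of $P_2$, so the only burning sequences are the two singleton sequences (each single vertex burns $P_2$ in time $T=2$). Therefore $\Delta_{P_2}$ consists of exactly two isolated $0$-simplexes; equivalently, $\Delta_{P_2} = \overline K_2$ viewed as a simplicial complex.

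For part (i), I would apply Theorem \ref{t4.20}(ii) with $G = P_2$ to get $\Delta_{P_2+P_2} \cong S(\Delta_{P_2})$. The suspension of two isolated vertices $\{v,w\}$ adds two new poles $v_*, w_*$ and, since $\Delta_{P_2}$ has no simplexes of dimension $\ge 1$, produces exactly the four $1$-simplexes $\{v_*,v\}, \{v_*,w\}, \{w_*,v\}, \{w_*,w\}$ and no higher simplexes. This is precisely the $4$-cycle $C_4$ on vertices $v_*, v, w_*, w$ in that cyclic order. As a sanity check I would invoke Proposition \ref{p4.11}(ii): the complement of $P_2+P_2$ is the complete bipartite graph $K_{2,2} = C_4$, which matches $sk_1(\Delta_{P_2+P_2})$, and Theorem \ref{t4.21} ($|\Delta_{P_2+P_2}|\cong S^1$) confirms there are no $2$-simplexes to worry about.

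For part (ii), I would apply Theorem \ref{t4.20}(ii) once more, this time with $G = P_2+P_2$, to get $\Delta_{3P_2} = \Delta_{(P_2+P_2)+P_2} \cong S(\Delta_{P_2+P_2}) \cong S(C_4)$. The suspension of the $4$-cycle introduces two new poles $u_*, z_*$ and, for every edge $e$ of $C_4$, produces a $2$-simplex $e\cup\{u_*\}$ and a $2$-simplex $e\cup\{z_*\}$. This yields $6$ vertices, $4+8 = 12$ edges, and $8$ triangular $2$-simplexes glued in two pyramidal ``caps'' along the equatorial cycle $C_4$, which is exactly the standard simplicial structure on the boundary of the regular octahedron.

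The only real content is verifying the base case $\Delta_{P_2}$ (done via Lemma \ref{l4.8}) and checking that the combinatorial description of the octahedron matches $S(C_4)$; neither step is hard, but the latter is the one where I would be most careful to match the simplicial data (vertices, edges, and triangles) on both sides rather than relying on the homeomorphism type $S^2$ already provided by Theorem \ref{t4.21}.
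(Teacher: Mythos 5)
Your proposal is correct and follows exactly the route the paper intends: the corollary is stated without proof as an instance of Theorem \ref{t4.21}, whose own proof is precisely the iterated application of Theorem \ref{t4.20}(ii) starting from the base case $\Delta_{P_2}=\overline K_2$ that you verify. Your explicit identification of $S(\overline K_2)$ with $C_4$ and $S(C_4)$ with the octahedron boundary is the (routine) content the paper leaves to the reader.
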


\begin{example}\label{e4.23}\rm 
  Let $Q$ be a  graph cube given in Figure \ref{Q}.  Every burning of $Q$  has a sourse sequence consisting of two elements. Hence the configuration space $\Delta_Q$ is a one-dimensional complex. It is given  in Figure \ref{Q}.  By Corollary \ref{c4.12},  we obtain that $\Delta_{\Delta_Q}=Q$. 
 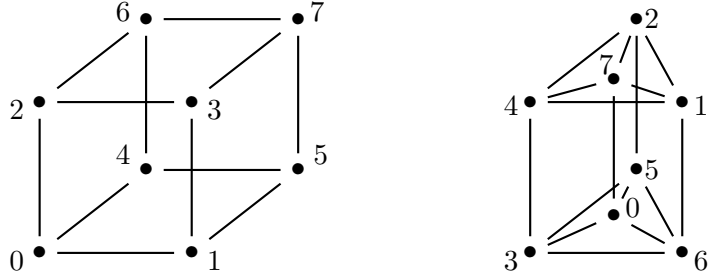
\begin{figure}[H]
\centering
\begin{tikzpicture}
\node (1) at (4,3) {$\bullet$};
\node (10) at (3.7,2.9) {$0$};
\node (2) at (6,3) {$\bullet$};
\node (20) at (6.3,2.9) {$1$};

\draw (1) edge[ color=black!120, thick, ] (2);

\node (1a) at (4,5) {$\bullet$};
\node (10a) at (3.7,4.9) {$2$};
\node (2a) at (6,5) {$\bullet$};
\node (20a) at (6.3,4.9) {$3$};

\node (3a) at (7.4,4.1) {$\bullet$};
\node (3ab) at (7.7,4.3) {$5$};
\node (30a) at (5.4,4.1) {$\bullet$};
\node (30ab) at (5.1,4.3) {$4$};
\draw (3a) edge[ color=black!120, thick, ] (30a);

\node (3b) at (7.4,6.1) {$\bullet$};
\node (3bc) at (7.65,6.2) {$7$};

\node (30b) at (5.4,6.1) {$\bullet$};
\node (30bc) at (5.1,6.2) {$6$};
\draw (3b) edge[ color=black!120, thick, ] (30b);

\draw (3a) edge[ color=black!120, thick, ] (3b);
\draw (30a) edge[ color=black!120, thick, ] (30b);

\draw (1) edge[ color=black!120, thick, ] (30a);

\draw (1a) edge[ color=black!120, thick, ] (2a);

\draw (2a) edge[ color=black!120, thick, ] (2);

\draw (1a) edge[ color=black!120, thick, ] (1);

\draw (3a) edge[ color=black!120, thick, ] (2);

\draw (1a) edge[ color=black!120, thick, ] (30b);
\draw (2a) edge[ color=black!120, thick, ] (3b);
\end{tikzpicture} \ \ \ \ \ \ \ \ \ \ \ \ \  \ \
\begin{tikzpicture}[scale=1]
\node (1) at (4,3) {$\bullet$};
\node (10) at (3.75,2.9) {$3$};
\node (2) at (6,3) {$\bullet$};
\node (20) at (6.25,2.9) {$6$};

\node (30a) at (5.4,4.1) {$\bullet$};
\node (30ab) at (5.6,4.1) {$5$};

\node (1a) at (4,5) {$\bullet$};
\node (10a) at (3.75,4.95) {$4$};
\node (2a) at (6,5) {$\bullet$};
\node (20a) at (6.25,4.95) {$1$};
\node (3b) at (5.1,5.3) {$\bullet$};
\node (3bc) at (5,5.5) {$7$};

\node (30b) at (5.4,6.1) {$\bullet$};
\node (30bc) at (5.6,6.1) {$2$};

\node (3be) at (5.1,3.5) {$\bullet$};
\node (3bce) at (5.35,3.6) {$0$};
\draw (3be) edge[ color=black!120, thick, ] (30a);
\draw (3be) edge[ color=black!120, thick, ] (2);
\draw (3be) edge[ color=black!120, thick, ] (1);

\draw (2) edge[ color=black!120, thick, ] (30a);
\draw (2a) edge[ color=black!120, thick, ] (30b);
\draw (1a) edge[ color=black!120, thick, ] (3b);
\draw (3be) edge[ color=black!120, thick, ] (3b);
\draw (3b) edge[ color=black!120, thick, ] (30b);
\draw (30a) edge[ color=black!120, thick, ] (30b);
\draw (1) edge[ color=black!120, thick, ] (30a);
\draw (1a) edge[ color=black!120, thick, ] (2a);
\draw (2a) edge[ color=black!120, thick, ] (2);
\draw (1a) edge[ color=black!120, thick, ] (1);
\draw (1a) edge[ color=black!120, thick, ] (30b);
\draw (2a) edge[ color=black!120, thick, ] (3b);
\draw (1) edge[ color=black!120, thick, ] (2);

\end{tikzpicture}
\caption{A graph cube $Q$ (left) and its configuration space $\Delta_Q$ (right).}
\label{Q}
\end{figure}
\end{example}

\begin{example}\label{e4.25} \rm (i) For $n\geq 1$, let $K_n\, (n\geq 1)$ be a complete graph with $n$ vertices and $\overline K_n\, (n\geq 1)$  be an edgeless graph with $n$ vertices.  Then $\Delta_{K_n}=\overline K_n$ and $\Delta_{\overline K_n} $  is the simplicial complex  consisting of one $(n-1)$-dimensional simplex.  Moreover we have $sk_1(\Delta_{\overline K_n})=K_n$. The latest fact illustrates Proposition~\ref{p4.11}.

(ii) For   $n,m\geq 1$, let $K_n, K_m$  be two complete graphs and  $K_{n, m}$  be a  complete bipartite graph .  Every  source set of a burning of the graph  $K_n+K_m$ consists of a pair $(v,w)$  where 
$v\in V_{K_n}, w\in V_{K_m}$ or   $w\in V_{K_n}, v\in V_{K_m}$.  Moreover every such pair is the source set of 
$(K_n+K_m)$-burning. Hence $\Delta_{K_n+K_m}=K_{n,m}$. Similarly, we can see that  $\Delta_{K_{n,m}}=K_n +K_m$. 

(iii) Consider the path graph $P_5$ with five vertices $\{1,2,3,4,5\}$.
Then the geometric realization $|\Delta_{P_5}|$ of the configuration space $\Delta_{P_3}$ is presented in Figure \ref{P5} below. The shadowed triangle in Figure \ref{P5} corresponds to the  two-simplex in $\Delta_{P_5}$. Again, according to Proposition~\ref{p4.11}, $sk_1(\Delta_{P_5})=\overline P_5$.
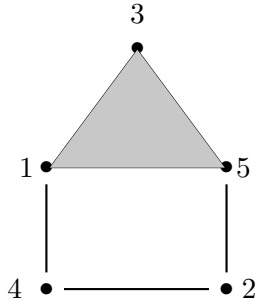
\begin{figure}[H]
    \centering
\begin{tikzpicture}[scale=0.46]
\coordinate (A1) at (0,0);
\coordinate (A2) at (2.5,3.4);
\coordinate (A3) at (5,0);
\node (A10) at (-0.1,0) {$\bullet$};
\node (A20) at (2.5,3.44) {$\bullet$};
\node (A30) at (5.05,0) {$\bullet$};
\node (100) at  (-0.25, 0) {$1\,\,\, \, \,\,$};
\node (102) at (2.5,4.43) {$3$};
\node (101) at (5.18,0) {$\,\,\,\, \, 5$};
\definecolor{c1}{RGB}{200,200,200}
\definecolor{c2}{RGB}{200,200,200}
\definecolor{c3}{RGB}{200,200,200}

\draw (A1) -- (A2) -- (A3) -- (A1) -- cycle;

\shade [left color=c1,right color=c2] (A1) -- (A2) -- (A3) -- (A1) -- cycle;
\node (B0) at (-0.1,-3.5) {$\bullet$};
\node (B1) at (-1,-3.5) {$4$};
\node (B2) at (5.05,-3.5) {$\bullet$};
\node (B3) at (5.7,-3.5) {$2$};

\draw (A10) edge[ color=black!120, thick, ] (B0);
\draw (B0) edge[ color=black!120, thick, ] (B2);
\draw (B2) edge[ color=black!120, thick, ] (A30);
\end{tikzpicture}
   \caption{The geometric realization  $|\Delta_{P_5}|$.}
    \label{P5}
\end{figure}

\end{example}

\section{Burning homology }\label{S5}
\setcounter{equation}{0}

In this Section, we define \emph{homology groups   of $G$-burnings}  as simplicial homology groups of $\Delta_G$.  We describe properties of introduced homology groups and  give   several examples

Let $R$ be a unitary commutative ring and $\Delta=(V,F)$ be a simplicial complex.  For $n\geq  0$,   \emph{homology groups} $H_n(\Delta, R) \,$  of the complex $\Delta$ with coefficients in $R$ are defined using a functorially defined chain complex 
$\Lambda_*(\Delta)=\{\Lambda_n, \partial\}$  of $R$ modules (see, for example,  \cite{Hatcher}, \cite{Matousek}, \cite{Munkres}, \cite{Prasolov}). Recall that the module $\Lambda_n(\Delta)$ is  generated by $n$-dimensional simplexes of $\Delta$. The \emph{augmented} chain complex $\widetilde\Lambda_*(\Delta)$  
is the complex $\Lambda_*(\Delta)$ equipped with the \emph{augmentation homomorphism} 
$$
\varepsilon \colon \Lambda_0(\Delta)\to R \ \  \text{where} \  \ \varepsilon\left(\sum_ i n_i \sigma_i\right) =\sum_i n_i 
$$
where  $\sigma_i$ are  zero dimensional simplexes and   $n_i\in R$. The  homology groups of the  augmented chain complex are called \emph{reduced homology  groups} and are denoted by $\widetilde H_*(\Delta,  R)$.

\begin{definition}\label{d5.1} \rm  The \emph{burning homology}  of a graph $G$   is the simplicial  homology  of the simplicial complex $\Delta_G$. The corresponding homology groups are denoted  by $H_k(\mathcal B(G), R)$ where $k\geq 0$. 
\end{definition}

\begin{proposition}\label{p5.2} Let a graph $G$ be a burning extension of a graph $H$. Then the natural inclusion  $i\colon H\to G$ induces  homomorphisms of
homology groups  
\begin{equation}\label{5.1}
i_*\colon H_k(\mathcal B(H), R)\to H_k(\mathcal B(G), R)  \ \text{for} \ \ k\geq 0.
\end{equation} 
\end{proposition}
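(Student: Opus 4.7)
The plan is to deduce this proposition as an almost immediate consequence of Proposition \ref{p4.15} together with the standard functoriality of simplicial homology. All the real work has already been done earlier: the geometric/combinatorial content is captured by Proposition \ref{p4.15}, and what remains is purely algebraic.

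First I would invoke Proposition \ref{p4.15} to obtain, from the inclusion $i\colon H\to G$, a simplicial map of configuration spaces $i_*\colon \Delta_H\to \Delta_G$. Concretely, $i_*$ is the map of vertex sets $V_H\hookrightarrow V_G$ induced by $i$, and Proposition \ref{p4.15} guarantees that this map sends simplexes of $\Delta_H$ to simplexes of $\Delta_G$, because every burning sequence of $H$ extends to one of $G$ by the definition of a burning extension.

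Next I would recall the standard fact that any simplicial map $f\colon \Delta\to \Delta'$ between simplicial complexes induces a chain map $f_\#\colon \Lambda_*(\Delta)\to \Lambda_*(\Delta')$ of the associated chain complexes of $R$-modules, defined on an $n$-simplex $\sigma=\{v_0,\dots,v_n\}$ by
\[
f_\#(\sigma)=
\begin{cases}
\{f(v_0),\dots,f(v_n)\} & \text{if the } f(v_j) \text{ are pairwise distinct,}\\
0 & \text{otherwise,}
\end{cases}
\]
and extended $R$-linearly. A direct check shows $f_\#\circ\partial=\partial\circ f_\#$, so $f_\#$ is a chain map and hence induces homomorphisms of simplicial homology groups $f_*\colon H_k(\Delta,R)\to H_k(\Delta',R)$ for every $k\geq 0$.

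Applying this general construction to the simplicial map $i_*\colon \Delta_H\to \Delta_G$ and invoking Definition \ref{d5.1}, which identifies $H_k(\mathcal B(H),R)$ with $H_k(\Delta_H,R)$ and similarly for $G$, yields the desired homomorphisms in \eqref{5.1}. There is no substantial obstacle here; the only point requiring any care is that the inclusion $H\subset G$ induces $i_*$ on \emph{configuration spaces} rather than directly on graphs, but this is exactly the content of Proposition \ref{p4.15}, which in turn depends on the burning extension hypothesis. One might also remark, for bookkeeping, that $(\mathrm{Id}_H)_*$ induces the identity and that composition is respected, so the assignment $G\mapsto H_k(\mathcal B(G),R)$ is functorial on the category of graphs with burning-extension inclusions.
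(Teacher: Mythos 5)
Your proposal is correct and follows exactly the paper's route: the paper's proof is the one-line "Follows from the definition of homology groups and Proposition \ref{p4.15}," and you have simply spelled out the standard functoriality of simplicial homology that this invokes. No differences of substance.
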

\begin{proof}  Follows from  the definition of homology groups and Proposition~\ref{p4.15}. 
\end{proof}

\begin{corollary}\label{c5.3}   Every inclusion of trees $i\colon H\to  G$  induces   
homomorphisms of
homology groups  
\begin{equation}\label{5.2}
i_*\colon H_k(\mathcal B(H), R)\to H_k(\mathcal B(G), R) \ \ \text{for} \ \ k\geq 0.
\end{equation} 
\end{corollary}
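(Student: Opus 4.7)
The plan is to derive this corollary as a direct composition of two results already established in the paper, namely Theorem 4.16 (burning extensions between trees) and Proposition 5.2 (functoriality of burning homology under burning extensions). No new technical ingredient should be needed; the work is essentially chasing the existing chain of functors.

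First, I would invoke Theorem 4.16: given that both $H$ and $G$ are trees and $H \subset G$, we conclude that $G$ is a burning extension of $H$ in the sense of Definition 4.7. That is, every burning sequence of $H$ extends (in the $\widehat S$-sense) to a burning sequence of $G$.

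Next, I would hand this off to Proposition 5.2. Since $G$ is a burning extension of $H$, the inclusion $i\colon H\to G$ induces a simplicial map $i_*\colon \Delta_H \to \Delta_G$ of the burning configuration spaces via Proposition 4.15. Any simplicial map induces a chain map on the associated simplicial chain complexes $\Lambda_*(\Delta_H)\to \Lambda_*(\Delta_G)$ over $R$, and hence induces $R$-module homomorphisms on homology in every degree $k\geq 0$. Collecting these maps gives precisely the desired homomorphisms $i_*\colon H_k(\mathcal B(H),R)\to H_k(\mathcal B(G),R)$.

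Since all the substantive content (the tree-to-tree extension property and the functoriality of $\Delta_{(-)}$ on burning extensions) is already in place, there is no real obstacle — the only thing one needs to verify is that the two results compose cleanly, i.e.\ that the hypothesis of Proposition 5.2 (burning extension) is exactly the conclusion of Theorem 4.16 applied to an inclusion of trees. This matching is immediate from the definitions, so the proof amounts to a one-line invocation of the two prior results.
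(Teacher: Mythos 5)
Your proposal is correct and follows essentially the same route as the paper: the paper's one-line proof cites Corollary \ref{c4.17} (which is itself the composite of Theorem \ref{t4.16} and Proposition \ref{p4.15}) together with the functoriality of simplicial homology, which is exactly the chain Theorem \ref{t4.16} $\Rightarrow$ Proposition \ref{p5.2} that you spell out. The only cosmetic difference is that you route through Proposition \ref{p5.2} explicitly rather than through Corollary \ref{c4.17}, but these rest on the same underlying facts.
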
 
\begin{proof} Follows from  the definition of homology groups and Corollary \ref{c4.17}. 
\end{proof} 

Now we present several results about burning homology groups for various classes of graphs.  These results  follows from the description of the corresponding configuration spaces obtained in Section \ref{S4} using the standard results of algebraic topology \cite{Dold, Hatcher, Hilton, MacLane,  May, Munkres, Prasolov}. 

\begin{proposition}\label{p5.4} For every graph $G$ and the one vertex graph $K_1$, 
$$
H_k(\mathcal B(G+K_1), \mathbb Z)=
\begin{cases} 
\mathbb Z& \text{for}  \ \  k=0, \\
                                                             0& \text{for} \ \ n\ne 0. \\
\end{cases}
$$ 
\end{proposition}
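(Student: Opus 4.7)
The plan is to reduce the statement to the well-known fact that a cone has trivial reduced homology. Since Definition \ref{d5.1} identifies $H_k(\mathcal B(G+K_1),\mathbb Z)$ with the simplicial homology of $\Delta_{G+K_1}$, the main ingredient is already at hand: Theorem~\ref{t4.20}(i) gives $\Delta_{G+K_1}=C(\Delta_G)$, the cone over $\Delta_G$ with apex $v_*$ (the unique vertex of $K_1$).

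My first step is to invoke this identification to rewrite the groups in question as $H_k(C(\Delta_G),\mathbb Z)$. The next step is to show directly on the chain level that the augmented simplicial chain complex $\widetilde\Lambda_*(C(\Delta_G))$ is acyclic. This is standard: define a cone operator $h\colon \Lambda_n(C(\Delta_G))\to\Lambda_{n+1}(C(\Delta_G))$ by
\begin{equation*}
h(\sigma)=
\begin{cases}
\sigma\sqcup\{v_*\}, & v_*\notin\sigma,\\
0, & v_*\in\sigma,
\end{cases}
\end{equation*}
together with $h(1_R)=\{v_*\}$ on the augmentation term. A direct verification using the boundary formula shows $\partial h+h\partial=\operatorname{id}$ on $\widetilde\Lambda_*(C(\Delta_G))$, so this complex is chain contractible and its homology vanishes.

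From this I conclude $\widetilde H_k(C(\Delta_G),\mathbb Z)=0$ for all $k\geq 0$, and translating back via the standard relation $\widetilde H_0\oplus\mathbb Z\cong H_0$ (for nonempty complexes) and $\widetilde H_k=H_k$ for $k\geq 1$ yields the claimed values. The cone is nonempty because $G+K_1$ always has the vertex $v_*\in V_{K_1}$, so $\Delta_{G+K_1}$ has at least one $0$-simplex and $H_0=\mathbb Z$.

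I do not anticipate a genuine obstacle here: the only point that needs care is making sure Theorem~\ref{t4.20}(i) is being applied in the precise form needed, namely that the apex vertex of the cone corresponds to the vertex of $K_1$ and that every maximal simplex of $\Delta_{G+K_1}$ is either a maximal simplex of $\Delta_G$ or such a simplex joined with $v_*$; this is exactly what Lemma~\ref{l4.8}(i) and the construction of $\mathcal B(G+K_1)$ guarantee. Once that is in place, the cone-contraction argument is entirely formal.
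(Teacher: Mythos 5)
Your proposal is correct and follows the same route as the paper: both reduce the claim to Theorem~\ref{t4.20}(i), identifying $\Delta_{G+K_1}$ with the cone $C(\Delta_G)$, and then invoke the acyclicity of a cone. The paper leaves the last step as a standard fact, whereas you spell out the chain contraction explicitly; that is a harmless elaboration, not a different argument.
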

\begin{proof} By Theorem \ref{t4.20}(i), $\Delta_{G+K_1}=C(\Delta_G)$ where 
$C(\Delta_G)$  is the cone over $G$.  From now the result follows. 
\end{proof}

\begin{proposition}\label{p5.5} For any connected graph $G$ and the path  graph $P_2$, 
$$
\widetilde H_k(\mathcal B(G+P_2), \mathbb Z)=\widetilde H_{k-1}(\mathcal B(G), \mathbb Z)\ \  \text{for} \ \ k\geq 0.  
$$ 
\end{proposition}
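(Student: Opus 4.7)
The plan is to reduce the statement to the classical suspension isomorphism for reduced simplicial homology. By Theorem~\ref{t4.20}(ii), the configuration space of $G+P_2$ is precisely the suspension $S(\Delta_G)$ of the configuration space of $G$ in the sense of Definition~\ref{d4.19}. Consequently the burning homology of $G+P_2$ is, by definition, the simplicial homology of $S(\Delta_G)$, and the goal is to identify it with a shift of $\widetilde H_*(\Delta_G,\mathbb Z)$.

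First, I would recall (or briefly re-derive) the suspension isomorphism $\widetilde H_k(S(\Delta),\mathbb Z)\cong \widetilde H_{k-1}(\Delta,\mathbb Z)$ for an arbitrary non-empty finite simplicial complex $\Delta$. There are two standard ways to obtain it in the simplicial setting, and either suffices. One way is to write $S(\Delta)=C_{v_*}(\Delta)\cup C_{w_*}(\Delta)$ as the union of two cones meeting along $\Delta$ itself; since each cone has trivial reduced homology (by the same argument that underlies Proposition~\ref{p5.4}), the Mayer--Vietoris long exact sequence collapses to isomorphisms $\widetilde H_k(S(\Delta))\cong \widetilde H_{k-1}(\Delta)$. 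The other way is purely algebraic: the augmented simplicial chain complex $\widetilde\Lambda_*(S(\Delta))$ decomposes, up to a degree shift, as the mapping cone of the identity on $\widetilde\Lambda_*(\Delta)$, from which the same shift follows.

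Applying this with $\Delta=\Delta_G$ (which is non-empty because $V_G\neq \emptyset$) immediately yields
\[
\widetilde H_k(\mathcal B(G+P_2),\mathbb Z)=\widetilde H_k(S(\Delta_G),\mathbb Z)\cong \widetilde H_{k-1}(\Delta_G,\mathbb Z)=\widetilde H_{k-1}(\mathcal B(G),\mathbb Z)
\]
for every $k\geq 0$, with the convention $\widetilde H_{-1}=0$ when $\Delta_G$ is non-empty (which matches the output of the Mayer--Vietoris sequence at the bottom). Connectedness of $G$ is used only to keep the setup parallel to Proposition~\ref{p5.4}; the isomorphism itself does not require it.

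The only point that needs a little care — and the place I would expect any subtlety — is verifying that the simplicial suspension of Definition~\ref{d4.19} really satisfies the suspension isomorphism in the form stated (in particular, that the two apex vertices $v_*$ and $w_*$ are never joined by an edge in $S(\Delta_G)$, so the object is a genuine suspension and not a join of $\Delta_G$ with a $1$-simplex). This follows directly from Definition~\ref{d4.19}, since no simplex of $S(\Delta_G)$ is allowed to contain both $v_*$ and $w_*$, and then the Mayer--Vietoris computation with the two cones $C_{v_*}(\Delta_G)$ and $C_{w_*}(\Delta_G)$ goes through with no further difficulty.
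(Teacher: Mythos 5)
Your proposal is correct and follows exactly the same route as the paper: invoke Theorem~\ref{t4.20}(ii) to identify $\Delta_{G+P_2}$ with the suspension $S(\Delta_G)$, then apply the standard suspension isomorphism for reduced simplicial homology. The paper simply states ``the result follows'' at that point, so your Mayer--Vietoris (or mapping-cone) justification of the suspension isomorphism is a welcome but inessential elaboration of the same argument.
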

\begin{proof}  By Theorem \ref{t4.20}(ii), $\Delta_{G+P_2}=S(\Delta_G)$ where 
$S(\Delta_G)$  is the suspension over $G$.  From now the result follows. 
\end{proof}

\begin{proposition}\label{p5.6}  Let $\Pi_n=nP_2\, (n\geq 1)$ be an iterated sum of the path graph $P_2$. Then 
$$
H_k(\mathcal B(\Pi_n), \mathbb Z)=\begin{cases} \mathbb Z & \text{for } \ \  k=0, n-1,\\
0, & \text{otherwise}.\\
\end{cases}
$$
\end{proposition}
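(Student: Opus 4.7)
The plan is to reduce the proposition directly to Theorem 4.21, which has already done all the geometric work. By Definition 5.1, $H_k(\mathcal{B}(\Pi_n),\mathbb Z)$ is by construction the simplicial homology of $\Delta_{\Pi_n}$ with integer coefficients. For a finite simplicial complex the simplicial homology coincides with the singular homology of its geometric realization (see, e.g., \cite{Hatcher} or \cite{Munkres}), so
\[
H_k(\mathcal B(\Pi_n),\mathbb Z)\;\cong\;H_k(|\Delta_{\Pi_n}|,\mathbb Z).
\]
Theorem 4.21 gives a homeomorphism $|\Delta_{\Pi_n}|\cong S^{n-1}$, so the whole computation collapses to the standard integral homology of the sphere, which yields $\mathbb Z$ in degrees $0$ and $n-1$ and $0$ in every other degree. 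This is exactly the claim of the proposition.

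Since the heavy lifting has been done in Theorem 4.21, no genuine obstacle arises; the only point requiring a small remark is the boundary case $n=1$, where $\Pi_1=P_2$ and $|\Delta_{P_2}|\cong S^0$ is a two-point space with $H_0=\mathbb Z^{2}$. Here the two values $k=0$ and $k=n-1$ collide into the single degree $k=0$, and the stated formula should be read with this convention (or equivalently through the reduced version $\widetilde H_k(S^{n-1})=\mathbb Z$ for $k=n-1$ and $0$ otherwise, together with $\widetilde H_{-1}(\text{pt})=0$ augmentation bookkeeping). An alternative route for $n\ge 2$ is to induct using Proposition 5.5 applied in its suspension form from Theorem 4.20(ii), with the base case $\Delta_{P_2}=S^0$; this recovers the same answer and avoids any appeal to geometric realization, but it is strictly longer than invoking Theorem 4.21 directly.
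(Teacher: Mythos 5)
Your proof is correct and takes essentially the same approach as the paper, whose entire proof is the single line ``Follows from Theorem 4.21''; you merely spell out the standard identification of simplicial homology with the singular homology of the geometric realization and the homology of $S^{n-1}$. Your remark about $n=1$ is a genuine (if minor) catch: $|\Delta_{P_2}|\cong S^0$ has $H_0=\mathbb Z^2$, consistent with Example 5.7(iii) of the paper, so the displayed formula must indeed be read with the convention you describe when the degrees $0$ and $n-1$ coincide.
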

\begin{proof} Follows from Theorem \ref{t4.21}. 
\end{proof}

\begin{example}\label{e5.7} \rm (i) Let $K_n\, (n\geq 1)$ be a complete graph with $n$ vertices. Then 
$$
H_k(\mathcal B(K_n), \mathbb Z)=\begin{cases} \mathbb Z^n& \text{for } \ \  k=0, \\
0, & \text{otherwise}.\\
\end{cases}
$$

(ii) Let   $\overline K_n$ be an edgeless graph with $n$ vertices.  
Then 
$$
H_k(\mathcal B(\overline K_n), \mathbb Z)=\begin{cases} \mathbb Z& \text{for } \ \  k=0, \\
0, & \text{otherwise}.\\
\end{cases}
$$

(iii) Let $P_n \, (n\geq 1)$ be a path graph with $n$ vertices. Then the nontrivial  burning homology groups with the coefficient ring $\mathbb Z$ for $1\leq n\leq 6$ are the  following:
$$
H_0(\mathcal B(P_1))=\mathbb Z, \ \ H_0(\mathcal B(P_2))=\mathbb Z^2, \ \ 
H_0(\mathcal B(P_3))=\mathbb Z^2, 
$$
$$ H_0(\mathcal B(P_4))=\mathbb Z, \ \ 
H_0(\mathcal B(P_5))=\mathbb Z, \ \ H_1(\mathcal B(P_5))=\mathbb Z,  \ \ 
H_0(\mathcal B(P_6))=\mathbb Z. 
$$
\end{example}

\bibliographystyle{abbrv}

Yuri Muranov: \emph{Faculty of Mathematics and Computer Science, University
of Warmia and Mazury in Olsztyn, Sloneczna 54 Street, 10-710 Olsztyn, Poland. }

e-mail: muranov@matman.uwm.edu.pl

\smallskip

Anna Muranova: \emph{Faculty of Mathematics and Computer Science, University
of Warmia and Mazury in Olsztyn, Sloneczna 54 Street, 10-710 Olsztyn, Poland. }

e-mail: anna.muranova@matman.uwm.edu.pl

\end{document}